
\documentclass[11pt]{article}

\title{Irredundant bases for the symmetric group}

\author{Colva M. Roney-Dougal and Peiran Wu}

\usepackage[a4paper,margin=2.54cm]{geometry}

\usepackage[T1]{fontenc}
\usepackage{lmodern}
\usepackage[UKenglish]{babel}

\usepackage[colorlinks]{hyperref}

\usepackage{chngcntr}

\usepackage{enumitem}

\usepackage{amsmath}
\usepackage{amssymb}
\usepackage{amsthm}
\usepackage{mathtools}
\usepackage{stmaryrd}

\usepackage{cleveref}
\Crefname{section}{\S}{\S\S}

\newtheorem{theorem}{Theorem}[section]
\newtheorem{lemma}[theorem]{Lemma}
\newtheorem{corollary}[theorem]{Corollary}
\theoremstyle{definition}
\newtheorem{remark}[theorem]{Remark}

\usepackage{foreign}
\newcommand*{\ie}{\foreign{i.e.}}

\newcommand*{\definedterm}[1]{\emph{#1}}

\DeclareMathOperator{\fix}{fix}
\DeclareMathOperator{\supp}{supp}
\DeclareMathOperator{\id}{id}
\DeclareMathOperator{\soc}{soc}

\newcommand*{\finitefield}[1]{\mathbb{F}_{#1}}
\newcommand*{\positivereal}{\mathbb{R}_{> 0}}

\newcommand*{\generatedby}[1]{\left\langle #1 \right\rangle}
\newcommand*{\cycleoneto}[1]{(1~2~\cdots~#1)}
\newcommand*{\cardinality}[1]{\left\lvert #1 \right\rvert}

\newcommand*{\symgroup}[1]{\operatorname{S}_{#1}}
\newcommand*{\altgroup}[1]{\operatorname{A}_{#1}}
\newcommand*{\symon}[1]{\operatorname{Sym}(#1)}
\newcommand*{\alton}[1]{\operatorname{Alt}(#1)}
\newcommand*{\agl}[2]{\operatorname{AGL}_{#1}(#2)}
\newcommand*{\aglon}[1]{\operatorname{AGL}(#1)}
\newcommand*{\glon}[1]{\operatorname{GL}(#1)}

\newcommand*{\normaliser}[2]{\operatorname{N}_{#1}(#2)}
\newcommand*{\centraliser}[2]{\operatorname{C}_{#1}(#2)}

\newcommand*{\basesize}[2]{\operatorname{b}(#1, #2)}
\newcommand*{\height}[2]{\operatorname{H}(#1, #2)}
\newcommand*{\mibs}[2]{\operatorname{I}(#1, #2)}
\newcommand*{\length}[1]{\ell(#1)}
\newcommand*{\relcomp}[2]{\operatorname{RC}(#1, #2)}
\newcommand*{\orderof}[1]{\cardinality{#1}}

\newcommand*{\hamming}[2]{d(#1, #2)}

\newcommand*{\vv}{\mathbf{v}}
\newcommand*{\vw}{\mathbf{w}}
\newcommand*{\vu}{\mathbf{u}}
\newcommand*{\vb}{\mathbf{b}}
\newcommand*{\vzero}{\mathbf{0}}

\begin{document}

\maketitle

\begin{abstract}
    \noindent
    An irredundant base of a group $G$ acting faithfully on a finite set $\Gamma$ is a sequence of points in $\Gamma$ that produces a strictly descending chain of pointwise stabiliser subgroups in $G$, terminating at the trivial subgroup.
    Suppose that $G$ is $\symgroup{n}$ or $\altgroup{n}$ acting primitively on $\Gamma$, and that the point stabiliser is primitive in its natural action on $n$ points.
    We prove that the maximum size of an irredundant base of $G$ is $O\left(\sqrt{n}\right)$, and in most cases $O\left((\log n)^2\right)$.
    We also show that these bounds are best possible.
\end{abstract}

\medskip

\noindent
\textbf{Keywords} irredundant base, symmetric group \quad \textbf{MSC2020} 20B15; 20D06, 20E15

\counterwithout{theorem}{section}

\section{Introduction}

\noindent
Let $G$ be a finite group that acts faithfully and transitively on a set $\Gamma$ with point stabiliser $H$.
A sequence $(\gamma_1, \ldots, \gamma_l)$ of points of $\Gamma$ is an \definedterm{irredundant base} for the action of $G$ on $\Gamma$ if
\begin{equation}
    \label{equation:irredundant-base}
    G > G_{\gamma_1} > G_{\gamma_1, \gamma_2} > \cdots > G_{\gamma_1, \ldots, \gamma_{l}} = 1.
\end{equation}
Let $\basesize{G}{H}$ and $\mibs{G}{H}$ denote the minimum and the maximum sizes of an irredundant base in $\Gamma$ for $G$ respectively.

Recently, Gill \& Liebeck showed in \cite{gill_liebeck_2023} that if $G$ is an almost simple group of Lie type of rank $r$ over the field $\mathbb{F}_{p^f}$ of characteristic $p$ and $G$ is acting primitively, then
\[ \mibs{G}{H} \leqslant 177 r^8 + \Omega(f), \]
where $\Omega(f)$ is the number of prime factors of $f$, counted with multiplicity.

Suppose now that $G$ is the symmetric group $\symgroup{n}$ or the alternating group $\altgroup{n}$.
An upper bound for $\mibs{G}{H}$ is the maximum length of a strictly descending chain of subgroups in $G$, known as the \definedterm{length}, $\length{G}$, of $G$.
Define $\varepsilon(G) \coloneqq \length{G / \soc G}$.
Cameron, Solomon, and Turull proved in \cite{cameron_solomon_turull_1989} that
\begin{equation}
    \label{equation:length-symmetric-group}
    \nonumber
    \length{G} = \left\lfloor \frac{3n - 3}{2} \right\rfloor - b_n + \varepsilon(G),
\end{equation}
where $b_n$ denotes the number of $1$s in the binary representation of $n$.
For $n \geqslant 2$, this gives
\begin{equation}
    \label{equation:length-symmetric-group-inequality}
    \length{G} \leqslant \frac{3}{2} n - 3 + \varepsilon(G).
\end{equation}
This type of upper bound is best possible for such $G$ in general,
in that for the natural action of $\symgroup{n}$ or $\altgroup{n}$ on $n$ points, the maximum irredundant base size is $n - 2 + \varepsilon(G)$.
A recent paper \cite{gill_lodà_2022} by Gill \& Lodà determined the exact values of $\mibs{G}{H}$ when $H$ is maximal and intransitive in its natural action on $n$ points, and in each case $\mibs{G}{H} \geqslant n - 3 + \varepsilon(G)$.

In this article, we present improved upper bounds for $\mibs{G}{H}$ in the case where $H$ is primitive.
Note that whenever we refer to the ``primitivity'' of a subgroup of $G$, we do so with respect to the natural action of $G$ on $n$ points.
We say that a primitive subgroup $H$ of $G$ is \definedterm{large}, if there are integers $m$ and $k$ such that $H$ is $\left(\symgroup{m} \wr \symgroup{k}\right) \cap G$ in product action on $n = m ^ k$ points or there are integers $m$ and $r$ such that $H$ is $\symgroup{m} \cap \, G$ acting on the $r$-subsets of a set of size $m$, \ie on $n = \binom{m}{r}$ points.
Logarithms are taken to the base $2$.

\begin{theorem}
    \label{theorem:optimal-upper-bounds}
    Suppose $G$ is $\symgroup{n}$ or $\altgroup{n}$ ($n \geqslant 7$) and $H \neq \altgroup{n}$ is a primitive maximal subgroup of $G$.
    \begin{enumerate}[label=(\roman*)]
        \item Either $\mibs{G}{H} < (\log n)^2 + \log n + 1$, or $H$ is large and $\mibs{G}{H} < 3 \sqrt{n} - 1$.
        \item There are infinitely many such $G$ and $H$ for which $\mibs{G}{H} \geqslant \sqrt{n}$.
        \item There are infinitely many such $G$ and $H$ for which $\mibs{G}{H} > \left(\log n\right)^{2} / (2 (\log 3)^2) + \log n / (2 \log 3)$ and $H$ is not large.
    \end{enumerate}
\end{theorem}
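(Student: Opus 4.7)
\medskip\noindent
The plan is to treat the three parts of \cref{theorem:optimal-upper-bounds} separately, all based on the elementary observation that every irredundant base for the coset action of $G$ on $G/H$ begins by dropping from $G$ to a conjugate of $H$ and then continues as a strictly descending chain of subgroups inside that conjugate, so $\mibs{G}{H} \leqslant 1 + \length{H}$. For part~(i), it therefore suffices to bound $\length{H}$ case-by-case in the O'Nan-Scott classification. For non-large primitive $H$, I would invoke a standard order bound of the form $\cardinality{H} \leqslant n^{1+\log n}$, valid for primitive subgroups of $\symgroup{n}$ not containing $\altgroup{n}$ outside the large classes; this yields $\length{H} \leqslant \log_2 \cardinality{H} \leqslant (\log n)^2 + \log n$, with the extra~$+1$ accounting for the initial step $G > H$. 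For large $H$, I would apply the Cameron-Solomon-Turull estimate~\eqref{equation:length-symmetric-group-inequality} to each symmetric-group factor of $\symgroup{m} \wr \symgroup{k}$, together with $n = m^k$ for product action or $n = \binom{m}{r}$ for subset action; the worst case is $k = 2$ in product action, giving $\length{H} \leqslant 2 \length{\symgroup{m}} + 1 \approx 3 \sqrt{n}$, sharpened by careful bookkeeping of constants (and using $\varepsilon(G) \leqslant 1$) to the claimed $3\sqrt{n} - 1$.

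For part~(ii), I would exhibit an explicit infinite family of large $H$ realising $\mibs{G}{H} \geqslant \sqrt{n}$. The natural choice is $H = (\symgroup{m} \wr \symgroup{2}) \cap G$ in product action on $n = m^2$ points: by prescribing cosets $Hg_1, Hg_2, \ldots$ whose successive pointwise stabilisers in $G$ realise a long chain of subgroups inside $H$, one should force $\sqrt{n}$ strict descents, in the spirit of the intransitive construction of Gill and Lod\`a~\cite{gill_lodà_2022}.

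For part~(iii), the arithmetic of the target bound, $(\log n)^2/(2(\log 3)^2) + \log n/(2\log 3) = k(k+1)/2$ when $n = 3^k$, points to $H = \agl{k}{3}$: this is affine, hence not large, and the length of its linear factor $\operatorname{GL}_k(\finitefield{3})$ is of order $k^2 \log_2 3$, comfortably above $k(k+1)/2$. I would construct cosets of $H$ in $G$ whose pointwise stabilisers successively cut out a Borel subgroup of this linear factor, then descend through its unipotent radical in $k(k-1)/2$ root-subgroup steps, and finally through the split maximal torus in $k$ more steps via its $\finitefield{3}^\times$-factors; together with the first drop $G > H$ this produces an irredundant base of length $1 + k(k+1)/2 > k(k+1)/2$.

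The main obstacle in parts~(ii) and~(iii) is the realisability issue: not every strictly descending chain of subgroups of $H$ arises as a chain of pointwise stabilisers $H \cap H^{g_1} \cap \cdots \cap H^{g_i}$ in the coset action, so each construction must explicitly exhibit elements $g_i \in G$ witnessing the claimed intersections and verify that the chain properly descends at every step. Orbit-counting inside $H$, together with the transitivity of $G$ on conjugates of $H$, should be enough to arrange any prescribed single-step intersection; the delicate issue is maintaining properness after many steps, which is why the two constructions above use highly structured flag-like chains rather than arbitrary ones.
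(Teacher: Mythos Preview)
Your overall strategy matches the paper's: part~(i) via $\mibs{G}{H}\leqslant 1+\length{H}$ together with Mar\'oti's order bounds in the non-large case and \eqref{equation:length-symmetric-group-inequality} in the large case; parts~(ii) and~(iii) via precisely the families $H=(\symgroup{m}\wr\symgroup{2})\cap G$ on $n=m^2$ points and $H=\agl{d}{3}\cap G$ on $n=3^d$ points, whose lower bounds the paper establishes in \Cref{theorem:wreath-case,theorem:affine-case} before quoting them here.

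There is one genuine gap in part~(i): the bound $\cardinality{H}\leqslant n^{1+\log n}$ does \emph{not} hold for every non-large primitive maximal $H$. Mar\'oti's theorem (\Cref{lemma:maróti-bounds}(ii)) leaves a further exceptional class, the $4$-transitive Mathieu groups $M_{11},M_{12},M_{23},M_{24}$, and the paper disposes of these separately by reading their lengths off the Atlas. You should also make the large-case arithmetic explicit rather than declaring $k=2$ to be worst: for the $r$-subset action one uses $n\geqslant\binom{m}{2}$, whence $m<2\sqrt{n}$ and $1+\length{\symgroup{m}}<3\sqrt{n}-1$; for product action with $k\geqslant 3$ the paper checks directly that $\tfrac32 mk-\tfrac12 k-1<\tfrac32 n^{1/3}\log n/\log 5<3\sqrt n-1$.

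For part~(iii) your proposed chain differs from the paper's. You want to reach a Borel subgroup of $\glon{V}$ and then descend its unipotent radical root-by-root before finishing off the torus; the paper instead passes from $H$ straight to the diagonal torus $T$ through a chain of \emph{subspace} stabilisers (\Cref{lemma:diagonal-subgroup-as-intersection-of-subspace-stabilisers}), each of which is shown in \Cref{lemma:subspace-stabiliser-is-intersection-of-conjugates} to equal $H\cap H^x$ for an explicit $x\in G$, and only then kills $T$ coordinate-by-coordinate (\Cref{lemma:chain-in-diagonal-subgroup}). Your route would require proving that each intermediate $U_iT$ between the Borel and the torus is itself an intersection of $G$-conjugates of $H$; these groups are not subspace stabilisers, so \Cref{lemma:subspace-stabiliser-is-intersection-of-conjugates} does not apply, and you offer no substitute beyond the general remark that realisability is ``the main obstacle''. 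The paper's subspace-flag chain sidesteps this entirely and in fact produces a slightly longer irredundant base, of size $d(d+1)/2+d-1+\varepsilon(G)$.
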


We also state our upper bounds for $\mibs{G}{H}$ in terms of $t \coloneqq \cardinality{G : H}$.
It is easy to show that $\mibs{G}{H} \leqslant \basesize{G}{H} \log t$.
Burness, Guralnick, and Saxl showed in \cite{burness_guralnick_saxl_2011} that with finitely many (known) exceptions, in fact $\basesize{G}{H} = 2$, in which case it follows that
\[ \mibs{G}{H} \leqslant 2 \log t. \]
Similar $O(\log t)$ upper bounds on the maximum irredundant base size were recently shown to hold for all non-large-base primitive groups of degree $t$ \cite{gill_lodà_spiga_2022,kelsey_roney-dougal_2022}, raising the question of whether such bounds are best possible in our case.
Using \Cref{theorem:optimal-upper-bounds}, we shall obtain better bounds in terms of $t$.

\begin{corollary}
    \label{corollary:optimal-upper-bounds-degree}
    \begin{enumerate}[label=(\roman*)]
        \item There exist constants $c_1, c_2 \in \positivereal$ such that,
              if $G$ is $\symgroup{n}$ or $\altgroup{n}$ ($n \geqslant 7$) and $H \neq \altgroup{n}$ is a primitive maximal subgroup of $G$ of index $t$,
              then either $\mibs{G}{H} < c_1 (\log \log t)^2$, or $H$ is large and $\mibs{G}{H} < c_2 \left(\log t / \log \log t\right)^{1/2}$.
        \item There is a constant $c_3 \in \positivereal$ and infinitely many such $G$ and $H$ for which $\mibs{G}{H} > c_3 \left(\log t / \log \log t\right)^{1/2}$.
        \item There is a constant $c_4 \in \positivereal$ and infinitely many such $G$ and $H$ for which $\mibs{G}{H} > c_4 (\log \log t)^2$ and $H$ is not large.
    \end{enumerate}
\end{corollary}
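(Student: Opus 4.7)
\medskip
\noindent The plan is to convert the bounds of \Cref{theorem:optimal-upper-bounds} from the parameter $n$ to $t = \cardinality{G : H}$ by establishing that $\log t$ is of order $n \log n$, and hence $\log \log t$ is of order $\log n$, uniformly across the primitive maximal $H$ under consideration. The upper bound $\log t \leqslant \log(n!) \leqslant n \log n$ is immediate from Stirling. For a matching lower bound I bound $\cardinality{H}$ in each case. For non-large primitive $H$, Mar\'oti's theorem on the orders of primitive groups gives $\cardinality{H} < n^{1 + \lceil \log n \rceil}$, so $\log \cardinality{H} = O((\log n)^2)$. For large $H = (\symgroup{m} \wr \symgroup{k}) \cap G$ acting on $n = m^k$ points with $m \geqslant 5$ and $k \geqslant 2$, the estimate $\cardinality{H} \leqslant (m!)^k \, k!$ yields $\log \cardinality{H} \leqslant (n \log n)/m^{k-1} + o(n \log n) \leqslant (n \log n)/5 + o(n \log n)$. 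For large $H = \symgroup{m} \cap G$ on $\binom{m}{r}$ subsets with $r \geqslant 2$, the estimate $m = O(n^{1/r})$ gives $\log \cardinality{H} \leqslant \log(m!) = O(n^{1/r} \log n) = o(n \log n)$. Combined with Stirling's $\log(n!) \geqslant n \log n - O(n)$, these imply $\log t \geqslant c \, n \log n$ for some absolute constant $c > 0$ and all $n \geqslant 7$ (finitely many small $n$ being absorbed into $c$).

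Consequently $\log \log t \asymp \log n$ and $\log t / \log \log t \asymp n$ uniformly, and each part of the corollary follows by direct substitution into \Cref{theorem:optimal-upper-bounds}. For (i): if $H$ is not large then $\mibs{G}{H} < (\log n)^2 + \log n + 1 \leqslant c_1 (\log \log t)^2$; if $H$ is large then $\mibs{G}{H} < 3 \sqrt{n} - 1 \leqslant c_2 \sqrt{\log t / \log \log t}$. For (ii): the infinite family furnished by \Cref{theorem:optimal-upper-bounds}(ii) satisfies $\mibs{G}{H} \geqslant \sqrt{n} \geqslant c_3 \sqrt{\log t / \log \log t}$, using the upper bound $\log t / \log \log t = O(n)$. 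For (iii): the infinite family furnished by \Cref{theorem:optimal-upper-bounds}(iii) satisfies $\mibs{G}{H} > (\log n)^2 / (2 (\log 3)^2) \geqslant c_4 (\log \log t)^2$, using the trivial inequality $\log \log t \leqslant 2 \log n$ that follows from $\log t \leqslant n \log n$.

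The main obstacle lies in the product-action case for large $H$, where $\log \cardinality{H}$ can be a non-negligible fraction of $\log(n!)$: one must verify that this fraction is uniformly bounded away from $1$, which I intend to extract from the hypothesis $m \geqslant 5$ via the identity $n \log n = k m^k \log m$. The remainder of the argument is routine bookkeeping of the constants, together with the handling of the finitely many small $n$.
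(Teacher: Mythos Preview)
Your strategy is correct and matches the paper's: both reduce the corollary to \Cref{theorem:optimal-upper-bounds} by proving the two-sided estimate $\log t \asymp n \log n$ (equivalently, $\log\log t \asymp \log n$ and $\log t/\log\log t \asymp n$), and both get the upper bound on $\log t$ from $\log(n!)\leqslant n\log n$. The only real difference is in how the lower bound $\log t \geqslant c\,n\log n$ is obtained.

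You split into three cases (non-large via Mar\'oti's $n^{1+\lfloor\log n\rfloor}$ bound, product action via $(m!)^k k!$, subset action via $m!$) and check in each that $\log\cardinality{H}$ is a fraction bounded away from $1$ of $n\log n$. The paper instead invokes a single uniform bound, $\cardinality{H}<50\,n^{\sqrt{n}}$ (Mar\'oti's Corollary~1.1, stated here as \Cref{lemma:maróti-bounds}(i)), valid for \emph{every} primitive $H\neq\altgroup{n}$ regardless of whether it is large; this gives $\log\cardinality{H}<\sqrt{n}\log n+\log 50$ in one line and avoids any case analysis. Your case-by-case bounds are sharper (e.g.\ $O((\log n)^2)$ in the non-large case), but that extra precision is not needed for the corollary, so the paper's route is shorter. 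One small imprecision: Mar\'oti's $n^{1+\lfloor\log n\rfloor}$ bound in the non-large case has finitely many Mathieu exceptions, which you should mention are absorbed into the constant (or simply use the uniform $50\,n^{\sqrt{n}}$ bound there too).
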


\begin{remark}
    \label{remark:main-constants}
    We may take $c_1 = 3.5$, $c_2 = 6.1$, $c_3 = 1$, $c_4 = 0.097$.
    If we assume $n > 100$, then $c_1 = 1.2$ and $c_2 = 4.4$ suffice.
\end{remark}

A sequence $\mathcal{B}$ of points in $\Gamma$ is \definedterm{independent} if no proper subsequence $\mathcal{B}'$ satisfies $G_{(\mathcal{B}')} = G_{(\mathcal{B})}$.
The maximum size of an independent sequence for the action of $G$ on $\Gamma$ is denoted $\height{G}{H}$.
It can be shown that $\basesize{G}{H} \leqslant \height{G}{H} \leqslant \mibs{G}{H}$.
Another closely related property of the action is the \definedterm{relational complexity}, denoted $\relcomp{G}{H}$, a concept which originally arose in model theory.
Cherlin, Martin, and Saracino defined $\relcomp{G}{H}$ in \cite{cherlin_martin_saracino_1996} under the name ``arity'' and showed that $\relcomp{G}{H} \leqslant \height{G}{H} + 1$.

\begin{corollary}
    Suppose $G$ is $\symgroup{n}$ or $\altgroup{n}$ ($n \geqslant 7$) and $H \neq \altgroup{n}$ is a primitive maximal subgroup of $G$.
    Then either $\relcomp{G}{H} < (\log n)^2 + \log n + 2$, or $H$ is large and $\relcomp{G}{H} < 3 \sqrt{n}$.
\end{corollary}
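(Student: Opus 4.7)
The plan is to derive this corollary directly from \Cref{theorem:optimal-upper-bounds}(i), using only the two inequalities already recorded in the excerpt: the Cherlin--Martin--Saracino bound $\relcomp{G}{H} \leqslant \height{G}{H} + 1$ and the elementary inequality $\height{G}{H} \leqslant \mibs{G}{H}$. Chaining these yields
\[ \relcomp{G}{H} \;\leqslant\; \mibs{G}{H} + 1, \]
so any upper bound on the maximum irredundant base size transfers, with an additive loss of $1$, to a bound on the relational complexity.

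The proof then splits into the two cases provided by \Cref{theorem:optimal-upper-bounds}(i). In the first case we have $\mibs{G}{H} < (\log n)^2 + \log n + 1$, and adding $1$ gives the claimed bound $\relcomp{G}{H} < (\log n)^2 + \log n + 2$. In the second case, $H$ is large and $\mibs{G}{H} < 3\sqrt{n} - 1$, so $\relcomp{G}{H} < 3\sqrt{n}$, again matching the statement. Since the hypotheses on $G$ and $H$ are identical to those in \Cref{theorem:optimal-upper-bounds}, no further case analysis is required.

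There is no real obstacle here: the corollary is purely a packaging of \Cref{theorem:optimal-upper-bounds}(i) through the known inequalities, and all the substantive work lies in the theorem itself. The only thing to double-check is that the constants in the two upper bounds have been chosen so that adding $1$ lands cleanly on the stated expressions $(\log n)^2 + \log n + 2$ and $3\sqrt{n}$, which they have by the way \Cref{theorem:optimal-upper-bounds}(i) is phrased.
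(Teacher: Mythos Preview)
Your proposal is correct and matches the paper's own reasoning: the corollary is stated immediately after the inequalities $\relcomp{G}{H} \leqslant \height{G}{H} + 1$ and $\height{G}{H} \leqslant \mibs{G}{H}$ precisely because it follows from them and \Cref{theorem:optimal-upper-bounds}(i) in the way you describe, and the paper does not supply a separate proof.
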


The maximal subgroups of the symmetric and alternating groups were classified in \cite{aschbacher_scott_1985,liebeck_praeger_saxl_1987}.
In order to prove statements (i) and (ii) of Theorem 1, we examine two families of maximal subgroups in more detail and determine lower bounds on the maximum irredundant base size, given in the next two results.

\begin{theorem}
    \label{theorem:affine-case}
    Let $p$ be an odd prime number and $d$ a positive integer such that $p^d \geqslant 7$ and let $n = p^d$.
    Suppose $G$ is $\symgroup{n}$ or $\altgroup{n}$ and $H$ is $\agl{d}{p} \cap G$.
    If $d = 1$, then
    \[ \mibs{G}{H} = 1 + \Omega(p - 1) + \varepsilon(G). \]
    If $d \geqslant 2$ and $p = 3, 5$, then
    \[ \frac{d (d + 1)}{2} + d - 1 + \varepsilon(G) \leqslant \mibs{G}{H} < \frac{d (d + 1)}{2} (1 + \log p) + \varepsilon(G). \]
    If $d \geqslant 2$ and $p \geqslant 7$, then
    \[ \frac{d (d + 1)}{2} + d \, \Omega(p - 1) - 1 + \varepsilon(G) \leqslant \mibs{G}{H} < \frac{d (d + 1)}{2} (1 + \log p) + \varepsilon(G). \]
\end{theorem}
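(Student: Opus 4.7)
The plan is to reduce the problem to the study of subgroup chains in $H$, then prove the upper bound by a length computation on $\agl{d}{p}$ and the lower bound by explicit construction. An irredundant base of length $l$ for $G$ on $\Gamma = G/H$ yields a strictly descending chain of subgroups in $G$ whose second term is a conjugate of $H$; after a $G$-translation, I may assume this term is $H$ itself, so the remaining $l-1$ terms form a chain inside $H$ with each successive member the intersection of its predecessor with a further conjugate of $H$. In particular $\mibs{G}{H} \leq 1 + \length{H}$, and my task is to bound this from above and to construct a matching base from below. For $d = 1$ this crude inequality will already be sharp, since $\length{H}$ equals $1 + \Omega(p-1)$ or $\Omega(p-1)$ according as $G = \symgroup{p}$ or $\altgroup{p}$.

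For the upper bound when $d \geq 2$, I write $H = (V \rtimes L) \cap G$ with $V \cong \mathbb{F}_p^d$ the translation subgroup and $L \cong \operatorname{GL}_d(p)$. Since $V$ is a $p$-group, $\length{V} = d$, and subadditivity of length on a normal series gives $\length{H} \leq d + \length{L \cap G}$. I bound $\length{L}$ by taking a maximal chain through a Borel subgroup $B = U \rtimes T$, where $U$ is the unipotent radical of order $p^{d(d-1)/2}$ and $T \cong C_{p-1}^d$ is the diagonal torus: this yields $\length{U} = d(d-1)/2$ and $\length{T} = d\,\Omega(p-1)$, while the chain from $L$ down to $B$ contributes at most $\sum_{i=1}^{d} \log\bigl((p^i-1)/(p-1)\bigr)$ further steps, which telescopes to at most $\tfrac{d(d-1)}{2}\log p + (d-1)$. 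Using $\Omega(p-1) \leq \log p$ to absorb the torus term, the totals combine to the claimed bound $\tfrac{d(d+1)}{2}(1+\log p) + \varepsilon(G)$; the $\varepsilon(G)$ correction simply reflects whether $H = \agl{d}{p}$ or is the index-$2$ subgroup inside $\altgroup{n}$.

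For the lower bound I must exhibit an irredundant base realising the stated length. A conjugate $H^g$ is the stabiliser in $G$ of an affine $\mathbb{F}_p^d$-structure on the $n = p^d$ points, so $H \cap H^g$ is the subgroup of $H$ preserving two such structures. For $d = 1$ I take $\gamma_2$ so that $H \cap H^{g_2}$ is a one-point stabiliser (the image of $\mathbb{F}_p^*$, intersected with $G$); then, following any chain of divisors of $p-1$ (or of $(p-1)/2$ in the alternating case) with prime quotients, I choose successive affine structures so each intersection cuts this cyclic group down to the next subgroup, contributing $\Omega(p-1)$ further irredundant steps. For $d \geq 2$ I would build the chain in stages mirroring the Borel decomposition of $L$: first descend from $H$ to a point stabiliser $\cong L \cap G$ via a single suitable conjugate, then through the parabolic ladder to a Borel, next through the unipotent radical $U$ (of length $d(d-1)/2$), and finally through the torus $T$ (of length at least $d\,\Omega(p-1) - 1$ once we exclude what is already forced), in each case realising the next subgroup as the intersection of the current stabiliser with a carefully chosen new affine structure.

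The principal obstacle is precisely this realisation: at every step one must explicitly produce $g \in G$ so that the intersection of the current stabiliser $K$ with $H^g$ is exactly the prescribed subgroup of $K$. Equivalently, one must construct a new affine structure on the $n$ points whose automorphism group meets $K$ in the target subgroup. For the unipotent and parabolic portions this can be achieved by judicious $\mathbb{F}_p$-linear coordinate changes applied to the second affine structure, chosen to share exactly the correct flag data with the first. For the torus portion, and for $d = 1$, one must pick affine structures whose interaction with the previously fixed ones isolates precisely the desired cyclic subgroup, and verifying at each step both the existence of a suitable $g$ and the strictness of the descent constitutes the bulk of the technical work.
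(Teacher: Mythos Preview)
Your overall architecture is sound, and your upper bound agrees with the paper's (which simply uses $\mibs{G}{H} \leqslant 1 + \Omega(\cardinality{H})$ and estimates $\cardinality{\agl{d}{p}}$ directly rather than decomposing via a Borel; your phrasing ``taking a maximal chain'' to prove an \emph{upper} bound on $\length{L}$ is backwards, but the intended computation of $\Omega(\cardinality{L})$ is fine). For the lower bound, however, your route diverges from the paper's and contains a step that is not clearly achievable, together with an overclaim in the torus phase.

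The paper never passes through $L = \glon{V}$ as a two-point stabiliser, nor through the unipotent radical as such. Its key device is a lemma showing that for \emph{any} proper nonzero subspace $W$, the setwise stabiliser of $W$ in $\glon{V}$ equals $H \cap H^x$ for an explicit $x \in \symon{V}$ (namely $x$ multiplies vectors in $W$ by a fixed $\lambda \neq 1$ and fixes $V \setminus W$ pointwise). With this in hand, the paper intersects $d(d+1)/2 - 1$ subspace stabilisers, for the spans $\generatedby{\vb_i, \ldots, \vb_j}$, to descend from $H$ straight to the diagonal torus $T$; it then descends inside $T$ via a separate cycle-theoretic lemma about $\generatedby{s} \cap \generatedby{s}^x$ for a $k$-cycle $s$. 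Your plan instead asks for $H \cap H^g = L$ in a single step, which is not obviously possible (and which the paper sidesteps entirely). More seriously, your claim that the torus phase contributes $d\,\Omega(p-1)$ steps in all cases runs into a genuine obstruction at $p = 5$: if $s$ is a $4$-cycle then there is \emph{no} $x$ with $\generatedby{s} \cap \generatedby{s}^x = \generatedby{s^2}$, and this is exactly why the theorem carries the weaker lower bound $d(d+1)/2 + d - 1 + \varepsilon(G)$ for $p \in \{3,5\}$. Your proposal would need to detect and accommodate this exception.
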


\begin{theorem}
    \label{theorem:wreath-case}
    Let $m \geqslant 5$ and $k \geqslant 2$ be integers and let $n = m^k$.
    Suppose $G$ is $\symgroup{n}$ or $\altgroup{n}$ and $H$ is $(\symgroup{m} \wr \symgroup{k}) \cap G$ in product action.
    Then
    \[ 1 + (m - 1) (k - 1) + \varepsilon(G) \leqslant \mibs{G}{H} \leqslant \frac{3}{2} m k - \frac{1}{2} k - 1. \]
\end{theorem}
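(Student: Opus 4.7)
The key ingredient is the general inequality $\mibs{G}{H} \leqslant 1 + \length{H}$: after $\gamma_1$ is chosen in \eqref{equation:irredundant-base}, the rest of the chain strictly descends inside a conjugate of $H$. Since $H \leqslant \symgroup{m} \wr \symgroup{k}$, and length is additive across normal series and direct products, we obtain
\[ \length{H} \leqslant \length{\symgroup{m} \wr \symgroup{k}} = k \length{\symgroup{m}} + \length{\symgroup{k}}. \]
Applying \eqref{equation:length-symmetric-group-inequality} with $\varepsilon(\symgroup{s}) = 1$ for $s \geqslant 2$ yields $\length{\symgroup{s}} \leqslant \tfrac{3}{2} s - 2$, and substitution gives the stated upper bound $\tfrac{3}{2} m k - \tfrac{1}{2} k - 1$.

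\textbf{Lower bound.} We construct an explicit irredundant base of length $1 + (m - 1)(k - 1) + \varepsilon(G)$ for the coset action of $G$ on $\Gamma = G / H$. Realise $H$ via its product action on $\Omega = \Delta^k$ with $\Delta = \{1, \ldots, m\}$, take $\gamma_1 = H$ so that $G_{\gamma_1} = H$, and for each pair $(i, j)$ with $2 \leqslant i \leqslant k$ and $2 \leqslant j \leqslant m$, enumerated in some fixed order, choose $g_{i, j} \in \symgroup{\Omega}$ whose action on $\Omega$ is supported on a small set of grid points along the $i$-th coordinate. The conjugate $g_{i, j} H g_{i, j}^{-1}$ then corresponds to a product structure on $\Omega$ that is a controlled perturbation of the standard one. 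Intersecting successively with $H$ in the enumerated order, we verify inductively that each step removes from the current pointwise stabiliser a specific generator of the base group $\symgroup{m}^k \leqslant H$ (the transposition in the $i$-th factor exchanging the values $1$ and $j$), so the chain strictly descends. A final $\varepsilon(G) \in \{0, 1\}$ coset handles the possible outer element lying in $\symgroup{n} \setminus \altgroup{n}$ when $G = \symgroup{n}$, delivering the extra step.

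\textbf{Main obstacle.} The upper bound is a clean length computation and the $\varepsilon(G)$-correction at the end of the lower bound is routine. The substance of the proof lies in the lower bound construction: choosing the elements $g_{i, j}$ and verifying, at each step, that the new conjugate $g_{i, j} H g_{i, j}^{-1}$ fails to contain the current pointwise stabiliser, so that the chain of intersections is genuinely strict. This reduces to a combinatorial analysis of how overlapping product structures on $\Omega$ intersect in $\symgroup{\Omega}$, and is where essentially all of the work will go.
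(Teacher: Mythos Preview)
Your upper bound argument is correct and matches the paper exactly: $\mibs{G}{H}\leqslant 1+\length{H}\leqslant 1+\length{\symgroup{m}\wr\symgroup{k}}=1+k\,\length{\symgroup{m}}+\length{\symgroup{k}}$, then apply \eqref{equation:length-symmetric-group-inequality}.

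For the lower bound, your overall strategy (build a long strictly descending chain by intersecting conjugates of $M=\symgroup{m}\wr\symgroup{k}$) is precisely the paper's, but two of your stated details do not match what actually works, and this is worth flagging before you invest effort in the wrong construction. First, the paper's conjugating elements are not ``small support'' perturbations that peel off one transposition at a time. The key technical lemma defines $x_{i,r}\in\altgroup{m^k}$ to act as a fixed long cycle $u\in\symgroup{m}$ on the first coordinate \emph{exactly when the $i$-th coordinate equals $r$}, and shows that
\[
M\cap M^{x_{i,r}}=\bigl(U\times(\symgroup{m})^{i-2}\times T_r\times(\symgroup{m})^{k-i}\bigr)\rtimes W_i,
\]
where $U=\langle u\rangle$, $T_r=(\symgroup{m})_r$, and $W_i=(\symgroup{k})_{1,i}$. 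So already the first two-point stabiliser collapses the first base factor to a cyclic group and the top group to a two-point stabiliser; strict descent along the chain is then witnessed not by losing individual transpositions but by the $i$-th base factor dropping from $(\symgroup{m})_{1,\ldots,r-1}$ to $(\symgroup{m})_{1,\ldots,r}$. The verification that $M\cap M^{x_{i,r}}$ is exactly this subgroup is the combinatorial core you correctly anticipated, and the paper handles it via the invariance of Hamming distance under $M$: an element of $M\cap M^{x_{i,r}}$ outside the claimed subgroup would force two tuples at Hamming distance $1$ to land at distance $>1$.

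Second, your $\varepsilon(G)$ bookkeeping is not how the paper proceeds, and your version as written is not obviously correct: once the chain reaches $1$ there is no ``final coset'' to append. The paper instead establishes the length $(m-1)(k-1)+2$ directly whenever $H=M$ (the $x_{i,r}$ are chosen in $\altgroup{m^k}$, so this covers both $G=\symgroup{n}$ and the cases $G=\altgroup{n}$ with $M\leqslant\altgroup{n}$), and then invokes the sandwich inequality $\mibs{\altgroup{n}}{M\cap\altgroup{n}}\geqslant\mibs{\symgroup{n}}{M}-1$ from \Cref{lemma:sym-sandwiches-alt} for the remaining case $H\neq M$. If you want to keep your ``add one step for $\symgroup{n}$'' narrative, you would need to argue separately that the chain built in $\altgroup{n}$ can be refined by one step in $\symgroup{n}$, which is extra work the paper avoids.
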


After laying out some preliminary results in \Cref{section:preliminaries}, we shall prove \Cref{theorem:affine-case,theorem:wreath-case} in \Cref{section:affine-case} and \Cref{section:wreath-case} respectively, before proving \Cref{theorem:optimal-upper-bounds} and \Cref{corollary:optimal-upper-bounds-degree} in \Cref{section:proof-of-main-result}.

\counterwithin{theorem}{section}

\section{The maximum irredundant base size}
\label{section:preliminaries}

In this section, we collect two general lemmas.
Let $G$ be a finite group acting faithfully and transitively on a set $\Gamma$ with point stabiliser $H$.
If $(\gamma_1, \ldots, \gamma_l)$ is an irredundant base of $G$, then it satisfies \eqref{equation:irredundant-base}.
The tail of the chain in \eqref{equation:irredundant-base} is a strictly descending chain of subgroups in $G_{\gamma_1}$, which is conjugate to $H$.
Therefore,
\[ \mibs{G}{H} \leqslant \length{H} + 1 \leqslant \Omega(\cardinality{H}) + 1. \]
To obtain a lower bound for $\mibs{G}{H}$, one approach is to look for a large explicit irredundant base.
The following lemma says it suffices to find a long chain of subgroups in $G$ such that every subgroup in the chain is a pointwise stabiliser of some subset in $\Gamma$.

\begin{lemma}
    \label{lemma:mibs-is-stabiliser-chain-length}
    Let $l$ be the largest natural number such that there are subsets $\Delta_0, \Delta_1, \ldots, \Delta_l \subseteq \Gamma$ satisfying
    \[ G_{(\Delta_0)} > G_{(\Delta_1)} > \cdots > G_{(\Delta_l)}. \]
    Then $\mibs{G}{H} = l$.
\end{lemma}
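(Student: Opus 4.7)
The plan is to prove the two inequalities $\mibs{G}{H} \leq l$ and $\mibs{G}{H} \geq l$ separately.

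The upper bound will be immediate: given any irredundant base $(\gamma_1, \ldots, \gamma_k)$ of $G$, the prefixes $\Delta_i \coloneqq \{\gamma_1, \ldots, \gamma_i\}$ for $i = 0, \ldots, k$ satisfy $G_{(\Delta_0)} > G_{(\Delta_1)} > \cdots > G_{(\Delta_k)}$ by \eqref{equation:irredundant-base}, so $k \leq l$ by the maximality of $l$. Taking $k = \mibs{G}{H}$ gives $\mibs{G}{H} \leq l$.

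For the lower bound, I would start from a sequence $\Delta_0, \ldots, \Delta_l$ realising the maximum and first reduce to the nested case by replacing each $\Delta_i$ with $\Delta'_i \coloneqq \bigcup_{j \leq i} \Delta_j$: since the original stabiliser chain is strictly descending, $G_{(\Delta'_i)} = \bigcap_{j \leq i} G_{(\Delta_j)} = G_{(\Delta_i)}$, and so the nested sequence $\Delta'_0 \subseteq \Delta'_1 \subseteq \cdots \subseteq \Delta'_l$ still witnesses the same chain. Next I would enumerate the points of $\Delta'_l$ block by block, listing first the elements of $\Delta'_0$, then those of $\Delta'_1 \setminus \Delta'_0$, and so on; the resulting weakly descending chain of pointwise stabilisers passes through each $G_{(\Delta'_i)}$ at the corresponding block boundary, so it contains at least $l$ strict descents. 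Deleting the redundant entries then yields an irredundant sequence $(\gamma_1, \ldots, \gamma_m)$ of length $m \geq l$ whose pointwise stabiliser is $G_{(\Delta'_l)}$. If $G_{(\Delta'_l)} \neq 1$, I would greedily append further points that are moved by the current stabiliser (always possible because $G$ acts faithfully on $\Gamma$) to extend the sequence to an irredundant base of $G$ of size at least $l$.

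The main obstacle is the second inequality: one must cope with the possibility that the given $\Delta_i$ are not nested, which the union trick above resolves without shrinking the chain, and then verify that the block-by-block enumeration genuinely passes through $l+1$ distinct stabilisers. The final extension to an honest base terminating at the trivial subgroup is then routine.
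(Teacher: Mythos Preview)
Your argument is correct and follows essentially the same route as the paper: nest the $\Delta_i$ by taking cumulative unions, enumerate the points block by block, and prune redundancies to obtain an irredundant base of length at least $l$, while the upper bound comes from viewing the prefixes of an irredundant base as the required $\Delta_i$. The only cosmetic difference is that the paper invokes maximality of $l$ at the outset to assume $\Delta_0 = \emptyset$ and $\Delta_l = \Gamma$ (so that $G_{(\Delta_l)} = 1$ automatically), whereas you achieve the same end by greedily extending at the final step; both are equally valid.
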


\begin{proof}
    Since $l$ is maximal, we may assume that $\Delta_0 = \emptyset$ and $\Delta_l = \Gamma$ and that $\Delta_{i - 1} \subseteq \Delta_i$, replacing $\Delta_i$ with $\Delta_1 \cup \cdots \cup \Delta_i$ if necessary.
    For each $i \in \{1, \ldots, l\}$, write $\Delta_i \setminus \Delta_{i - 1} = \{\gamma_{i, 1}, \ldots, \gamma_{i, m_i}\}$.
    Then $(\gamma_{1, 1}, \ldots, \gamma_{1, m_1}, \allowbreak \gamma_{2, 1}, \ldots, \gamma_{2, m_2}, \allowbreak \ldots, \gamma_{l, 1}, \ldots, \gamma_{l, m_l})$ is a base for $G$ and every subgroup $G_{(\Delta_i)}$ appears in the corresponding chain of point stabilisers.
    Therefore, by removing all redundant points, we obtain an irredundant base of size at least $l$, so $\mibs{G}{H} \geqslant l$.

    On the other hand, given any irredundant base $(\gamma_1, \ldots, \gamma_m)$ of $G$, we can take $\Delta_i \coloneqq \{\gamma_1, \ldots, \gamma_i\}$.
    Therefore, $\mibs{G}{H} = l$.
\end{proof}

Once we have an upper or lower bound for $\mibs{G}{H}$, we can easily obtain a corresponding bound for the maximum irredundant base size of various subgroups of $G$.

\begin{lemma}
    \label{lemma:sym-sandwiches-alt}
    Suppose $M$ is a subgroup of $\symgroup{n}$ with $M \nleqslant \altgroup{n}$.
    Then
    \begin{equation}
        \nonumber
        \mibs{\symgroup{n}}{M} - 1 \leqslant \mibs{\altgroup{n}}{M \cap \altgroup{n}} \leqslant \mibs{\symgroup{n}}{M}.
    \end{equation}
\end{lemma}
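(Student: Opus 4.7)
The plan is to apply \Cref{lemma:mibs-is-stabiliser-chain-length} to translate each side of the inequality into a statement about the maximum length of a chain of pointwise stabilisers acting on a common set. Since $M \nleqslant \altgroup{n}$, we have $M \altgroup{n} = \symgroup{n}$, so $\altgroup{n}$ acts transitively on $\Gamma \coloneqq \symgroup{n}/M$ with point stabiliser $M \cap \altgroup{n}$. For each subset $\Delta \subseteq \Gamma$, the pointwise stabiliser in $\altgroup{n}$ is $(\symgroup{n})_{(\Delta)} \cap \altgroup{n}$, an index-$1$-or-$2$ subgroup of $(\symgroup{n})_{(\Delta)}$. I would record this observation at the start.

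For the upper bound, I would take a chain $(\altgroup{n})_{(\Delta_0)} > \cdots > (\altgroup{n})_{(\Delta_l)}$ witnessing $\mibs{\altgroup{n}}{M \cap \altgroup{n}} = l$ and lift it to the chain of $\symgroup{n}$-stabilisers of the same subsets. Any equality $(\symgroup{n})_{(\Delta_i)} = (\symgroup{n})_{(\Delta_{i+1})}$ would force equality after intersecting with $\altgroup{n}$, contradicting strictness. Hence the $\symgroup{n}$-chain is also strict, and \Cref{lemma:mibs-is-stabiliser-chain-length} gives $\mibs{\symgroup{n}}{M} \geqslant l$.

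For the lower bound, I would start instead from a chain $(\symgroup{n})_{(\Delta_0)} > \cdots > (\symgroup{n})_{(\Delta_l)}$ of maximum length $l = \mibs{\symgroup{n}}{M}$ and intersect each term with $\altgroup{n}$. The key step, and what I expect to be the main obstacle, is to control the number of collapses. A short case analysis using the index-$1$-or-$2$ fact shows that an equality $(\altgroup{n})_{(\Delta_i)} = (\altgroup{n})_{(\Delta_{i+1})}$ forces $(\symgroup{n})_{(\Delta_i)} \not\leqslant \altgroup{n}$ and $(\symgroup{n})_{(\Delta_{i+1})} \leqslant \altgroup{n}$. Since one may assume, as in the proof of \Cref{lemma:mibs-is-stabiliser-chain-length}, that $\Delta_0 \subseteq \Delta_1 \subseteq \cdots \subseteq \Delta_l$, the property ``$(\symgroup{n})_{(\Delta_i)} \leqslant \altgroup{n}$'' is preserved as $i$ increases, so at most one such collapse can occur along the chain.

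Removing the single repeated term yields a strictly descending chain of $\altgroup{n}$-stabilisers with at least $l - 1$ descents, and \Cref{lemma:mibs-is-stabiliser-chain-length} then gives $\mibs{\altgroup{n}}{M \cap \altgroup{n}} \geqslant l - 1 = \mibs{\symgroup{n}}{M} - 1$, completing the proof.
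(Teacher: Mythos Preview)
Your argument is correct. The paper does not give a direct proof at all: it simply cites \cite[Lemma~2.8]{gill_lodà_spiga_2022} for one inequality and \cite[Lemma~2.3]{kelsey_roney-dougal_2022} for the other, both of which are general statements comparing $\mibs{G}{H}$ with $\mibs{N}{H\cap N}$ for a normal subgroup $N$ of small index. Your approach instead supplies a self-contained proof via \Cref{lemma:mibs-is-stabiliser-chain-length}, which is more elementary and keeps the argument internal to the paper; the cited lemmas are essentially proved by the same index-$2$ counting you carry out. One small point: for the upper bound you should also invoke the nesting assumption $\Delta_0\subseteq\Delta_1\subseteq\cdots$ (exactly as you do for the lower bound), since without it the lifted sequence $(\symgroup{n})_{(\Delta_i)}$ need not be totally ordered by inclusion; once that is said, the rest goes through unchanged.
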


\begin{proof}
    This follows immediately from {\cite[Lemma 2.8]{gill_lodà_spiga_2022} and \cite[Lemma 2.3]{kelsey_roney-dougal_2022}}.
\end{proof}

\section{The affine case}
\label{section:affine-case}

In this section, we prove \Cref{theorem:affine-case}.
The upper bounds will follow easily from examinations of group orders.
Therefore, we focus most of our efforts on the construction of an irredundant base, leading to the lower bounds.

Let $p$ be a prime number and $d$ be an integer such that $p^d \geqslant 7$ and let $V$ be a $d$-dimensional vector space over the field $\finitefield{p}$.
Let $G$ be $\symon{V}$ or $\alton{V}$.
Consider the affine group $\aglon{V}$, the group of all invertible affine transformations of $V$, and let $H \coloneqq \aglon{V} \cap G$.

\begin{theorem}[\cite{liebeck_praeger_saxl_1987}]
    \label{theorem:affine-case-maximality}
    The subgroup $H$ is maximal in $G$ (with $p^d \geqslant 7$) if and only if one of the following holds:
    \begin{enumerate}[label=(\roman*)]
        \item $d \geqslant 2$ and $p \geqslant 3$;
        \item $G = \symon{V}$, $d = 1$ and $p \geqslant 7$;
        \item $G = \alton{V}$, $d \geqslant 3$ and $p = 2$;
        \item $G = \alton{V}$, $d = 1$, and $p = 13, 19$ or $p \geqslant 29$.
    \end{enumerate}
\end{theorem}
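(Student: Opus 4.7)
The plan is to exploit the $2$-transitivity of $\aglon{V}$ and appeal to the O'Nan-Scott theorem to classify possible intermediate groups between $H = \aglon{V} \cap G$ and $G$. Since $\aglon{V}$ is $2$-transitive on $V$ (translations act regularly and $\glon{V}$ acts transitively on $V \setminus \{0\}$), $H$ is primitive; moreover $H$ either equals $\aglon{V}$ or is the index-$2$ subgroup $\aglon{V} \cap \alton{V}$. Hence any proper overgroup $K$ with $H < K < G$ either equals $\aglon{V}$ itself, or strictly contains $\aglon{V}$ and is therefore also primitive on $V$.

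The case $K = \aglon{V}$ is handled by a parity calculation: for $p$ odd, $\glon{V}$ contains an odd permutation of $V$ (for instance a reflection, whose cycle structure is read off directly), so $\aglon{V} \not\leqslant \alton{V}$; for $p = 2$ every affine transformation is a product of transpositions, and $\aglon{V} \leqslant \alton{V}$ precisely when $d \geqslant 3$. This picks out case (iii) as exactly the configuration in which $\aglon{V}$ is an intermediate overgroup of $H$ in $\alton{V}$ and $H$ is non-maximal in $\symon{V}$. For an overgroup $K$ that strictly contains $\aglon{V}$ and is primitive on $V$, the O'Nan-Scott theorem narrows the options: affine type is impossible because $\aglon{V}$ is the full affine group; product-action, diagonal and twisted-wreath types are ruled out by a socle comparison, since the elementary abelian regular socle of $\aglon{V}$ cannot embed compatibly with the non-abelian socles required by these types at a prime-power degree. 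The remaining possibility is that $K$ is almost simple and $2$-transitive of degree $p^d$.

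The main obstacle is then to invoke the CFSG-based classification of $2$-transitive almost simple groups of prime-power degree and decide, for each candidate, whether it contains a conjugate of $\aglon{V}$. For $d \geqslant 2$ and $p$ odd no such $K$ properly contains $\aglon{V}$, which gives (i). For $d = 1$, the $2$-transitive almost simple groups of prime degree $p$ form a short list (certain $\operatorname{PSL}_k(q)$ acting on the points of projective space, together with the exceptional $2$-transitive actions of $\operatorname{PSL}_2(11)$, $M_{11}$ and $M_{23}$); matching this list against $H$ by order and parity considerations identifies $p = 11, 23$ as exclusions from (iv) and pins down the admissible primes for (ii). For $p = 2$ and $d \geqslant 3$ a parallel analysis in $\alton{V}$ yields (iii). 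Assembling these outcomes completes the classification.
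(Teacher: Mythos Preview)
The paper does not prove this statement: it is quoted as a theorem of Liebeck, Praeger and Saxl \cite{liebeck_praeger_saxl_1987} and carries no proof in the text. So there is no ``paper's proof'' to compare against; your proposal is an attempt to reconstruct the content of that external reference.

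As a sketch toward such a reconstruction, your outline has the right shape (O'Nan--Scott plus the CFSG list of $2$-transitive groups of prime-power degree is indeed how this is established), but there is a genuine gap in the opening reduction. You assert that any proper overgroup $K$ with $H < K < G$ ``either equals $\aglon{V}$ itself, or strictly contains $\aglon{V}$''. This dichotomy is unjustified and in fact fails in the alternating case: when $p$ is odd you yourself note that $\aglon{V} \nleqslant \alton{V}$, so no subgroup $K \leqslant \alton{V}$ can contain $\aglon{V}$, and yet there may still be primitive $K$ with $H < K < \alton{V}$. What one actually needs is that $K$ is primitive (which does follow, since $H$ is) and then to run O'Nan--Scott on $K$ directly; the elementary abelian translation subgroup sits inside $H \leqslant K$ but is not a priori normal in $K$, so ruling out the non-affine O'Nan--Scott types requires more than the one-line ``socle comparison'' you offer. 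Likewise, the decisive step ``for $d \geqslant 2$ and $p$ odd no such $K$ properly contains $\aglon{V}$'' and the prime-degree exclusions in (iv) are asserted rather than argued; carrying these out is precisely the substantive content of the Liebeck--Praeger--Saxl analysis. For the purposes of this paper it is entirely appropriate simply to cite the result, as the authors do.
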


In this section, we only consider the case where $p$ is odd.
Owing to \Cref{lemma:sym-sandwiches-alt}, we shall assume $G = \symon{V}$ and $H = \aglon{V}$ for now.
In the light of \Cref{lemma:mibs-is-stabiliser-chain-length}, we introduce a subgroup $T$ of diagonal matrices and look for groups containing $T$ that are intersections of $G$-conjugates of $H$ (\Cref{section:subspace-stabilisers}) and subgroups of $T$ that are such intersections (\Cref{section:subgroups-of-diagonal-subgroup}), before finally proving \Cref{theorem:affine-case} (\Cref{section:proof-of-affine-case-theorem}).

\subsection{Subspace stabilisers and the diagonal subgroup}
\label{section:subspace-stabilisers}

Let $T$ be the subgroup of all diagonal matrices in $\glon{V}$ with respect to a basis $\vb_1, \ldots, \vb_d$.
Let $\mu$ be a primitive element of $\finitefield{p}$.
We now find a strictly descending chain of groups from $\symon{V}$ to $T$ consisting of intersections of $G$-conjugates of $H$.
We treat the cases $d = 1$ and $d \geqslant 2$ separately.

\begin{lemma}
    \label{lemma:diagonal-group-as-two-point-stabiliser}
    Suppose $d = 1$ and $G = \symon{V}$.
    Then there exists $x \in G$ such that $H \cap H^{x} = T$.
\end{lemma}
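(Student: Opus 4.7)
The plan is to exploit the rigid subgroup structure of $H = V \rtimes T$, where $T \cong \finitefield{p}^{\ast}$ acts on $V = \finitefield{p}$ by scalar multiplication. Since the only $T$-invariant subgroups of $V$ are $\{\vzero\}$ and $V$, and every subgroup $K$ of $H$ with $T \subseteq K$ decomposes as $K = (K \cap V) \cdot T$, the only subgroups of $H$ containing $T$ are $T$ itself and $H$. Moreover, \Cref{theorem:affine-case-maximality} gives that $H$ is maximal in $G = \symgroup{p}$; and $H$ is not normal in $G$, because the only normal subgroups of $\symgroup{p}$ for $p \geqslant 5$ are $1$, $\altgroup{p}$ and $G$, none of which equals $H$. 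Hence $N_G(H) = H$, so any $x \in G \setminus H$ automatically satisfies $H^x \neq H$ and therefore $H \cap H^x \subsetneq H$. It follows that if we can arrange $T \subseteq H \cap H^x$ as well, then $H \cap H^x$ must equal $T$.

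To produce such an $x$, I would take the \emph{inversion map} on $V$: define $x \in \symon{V}$ by $x(\vzero) = \vzero$ and $x(v) = v^{-1}$ for every $v \in V \setminus \{\vzero\}$. This is clearly a permutation of $V$, and it is an involution, so $x^{-1} = x$. Writing $T = \generatedby{t}$ with $t \colon v \mapsto \mu v$, a one-line calculation shows that $x t x^{-1}$ sends $v \in V \setminus \{\vzero\}$ to $(\mu \cdot v^{-1})^{-1} = \mu^{-1} v$, and fixes $\vzero$, so $x t x^{-1} \in T$. Therefore $x T x^{-1} \subseteq T \subseteq H$, which is equivalent to $T \subseteq H^{x}$; combined with $T \subseteq H$ this gives $T \subseteq H \cap H^x$.

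Finally, one checks that $x \notin H$: if $x$ were affine then $x(\vzero) = \vzero$ would force $x \colon v \mapsto av$ for some $a \in \finitefield{p}^{\ast}$, and then $v^{-1} = av$, \ie $v^{2} = a^{-1}$, would have to hold for every nonzero $v$, which fails as soon as $p \geqslant 5$. Combining this with the reduction in the first paragraph yields $H \cap H^x = T$. The conceptual heart of the proof is the subgroup-lattice observation that nothing lies strictly between $T$ and $H$; I do not expect any serious obstacle, since once this reduction is in place almost any non-affine permutation of $V$ that normalises $T$ (\eg inversion, or more generally $v \mapsto v^k$ for $k$ coprime to $p-1$ and $k \not\equiv 1 \pmod{p-1}$) will do the job.
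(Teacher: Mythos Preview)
Your argument is correct and follows essentially the same strategy as the paper: show that $H$ is self-normalising in $G$, exhibit an $x \in G \setminus H$ normalising $T$, and conclude $H \cap H^x = T$ since $[H:T] = p$ is prime. The only differences are cosmetic. You deduce $N_G(H) = H$ from maximality of $H$ together with non-normality, whereas the paper argues directly that the translation subgroup $\generatedby{t_\vu}$ is characteristic in $H$ and that $H = \normaliser{G}{\generatedby{t_\vu}}$. For the element $x$, the paper writes down an explicit product of $(p-1)/2$ transpositions sending $\mu^{i}\vu \mapsto \mu^{-1-i}\vu$, which conjugates $m_\mu$ to $m_\mu^{-1}$; your field-inversion map $v \mapsto v^{-1}$ does the same job (indeed it differs from the paper's $x$ only by an element of $T$) and is arguably the more natural choice. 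Either way the essential content --- that $N_G(T)$ is strictly larger than $T$ while $N_G(H) = H$ --- is identical.
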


\begin{proof}
    Since $V$ is $1$-dimensional, $\glon{V} = T$ is generated by the scalar multiplication $m_\mu$ by $\mu$.
    Let $\vu \in V \setminus \{ \vzero \}$ and let $t_\vu$ be the translation by $\vu$.
    Then $H = \generatedby{t_\vu} \rtimes \generatedby{m_\mu}$ is the normaliser of $\generatedby{t_\vu}$ in $G$ and $\generatedby{t_\vu}$ is 
    a characteristic subgroup of $H$.
    Hence $H$ is self-normalising in $G$.
    Define (in cycle notation)
    \[
        x \coloneqq (\vu~~\mu^{-1} \vu) (\mu \vu~~\mu^{-2} \vu) \cdots (\mu^{\frac{p - 3}{2}} \vu ~~\mu^{-\frac{p - 1}{2}} \vu) \in G.
    \]
    Then $x \notin H$ and so $x$ does not normalise $H$.
    But $x$ normalises $\generatedby{m_\mu}$, as ${m_\mu}^x = {m_\mu}^{-1}$.
    Therefore,
    \[ T = \generatedby{m_\mu} \leqslant H \cap H^{x} < H. \]
    Since the index $\cardinality{H : T} = p$ is prime, $H \cap H^x = T$.
\end{proof}

The following two lemmas concern the case $d \geqslant 2$.
An affine subspace of $V$ is a subset of the form $\vv + W$, where $\vv \in V$ and $W$ is a vector subspace of $V$.
The (affine) dimension of $\vv + W$ is the linear dimension of $W$.
For an affine transformation $h = g t_\vu$ with $g \in \glon{V}$ and $t_\vu$ denoting the translation by some $\vu \in V$, if $\fix(h)$ is non-empty, then $\fix(h)$ is an affine subspace of $V$, since $\fix(h) = \vv + \ker(g - \id_V)$ for any $\vv \in \fix(h)$.

\begin{lemma}
    \label{lemma:subspace-stabiliser-is-intersection-of-conjugates}
    Suppose $d \geqslant 2$, $p \geqslant 3$, and $G = \symon{V}$.
    Let $W$ be a proper, non-trivial subspace of $V$ and let $K < \glon{V}$ be the setwise stabiliser of $W$.
    Then there exists $x \in G$ such that $H \cap H^x = K$.
\end{lemma}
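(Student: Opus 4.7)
The plan is to exhibit an element $x \in G \setminus H$ normalising $K$, so that $K \leq H \cap H^x < H$, and then to show $H \cap H^x = K$ by classifying the subgroups of $H$ that contain $K$.

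I would first enumerate the subgroups $L$ with $K \leq L \leq H$. Writing $H = \glon{V} \ltimes V$, the image $\bar L$ of $L$ in $\glon{V}$ contains $K$; since $K$ is a maximal parabolic subgroup of $\glon{V}$, we have $\bar L \in \{K, \glon{V}\}$. The intersection $L \cap V$ is a $K$-submodule of $V$, and the only such submodules are $\{\vzero\}$, $W$, and $V$ (the extension $0 \to W \to V \to V/W \to 0$ does not split as $K$-modules). Moreover, any lift of $\glon{V}$ to $H$ is the stabiliser in $H$ of some point of $V$, and since the common fixed-point set of $K$ on $V$ is $\{\vzero\}$, the only such lift containing $K$ is $\glon{V}$ itself. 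This yields the intermediate subgroups $K$, $K \ltimes W$, $K \ltimes V$, $\glon{V}$, and $H$. The option $H$ is excluded because $H$ is self-normalising in $G$ by maximality, so $x \notin H$ forces $H^x \neq H$. The remaining three options are ruled out by the existence of some $g \in \glon{V} \setminus K$ with $x g x^{-1} \notin H$ (excluding $\glon{V}$), together with the existence of some $\vw \in W \setminus \{\vzero\}$ with $x t_\vw x^{-1} \notin H$ (excluding both $K \ltimes W$ and $K \ltimes V$, the latter containing the former).

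The main obstacle is the construction of $x$. Any $x \in N_G(K)$ must preserve the three $K$-orbits $\{\vzero\}$, $W \setminus \{\vzero\}$, and $V \setminus W$ on $V$; and since one can check that the centraliser of $K$ in $\symon{V}$ lies inside $K$, any such $x$ outside $K$ must induce an outer automorphism of $K$ via conjugation. I would construct a candidate by lifting an outer automorphism $\alpha$ of $K$ to a permutation of $V$ that fixes $\vzero$, stabilises $W$ setwise, and acts on the cosets of $W$ in $V \setminus W$ in a manner compatible with $\alpha$. The verification that $x$ is not affine, and that conjugation by $x$ sends some element of $\glon{V} \setminus K$ and some $t_\vw$ outside $H$, then reduces to computing the images of a few well-chosen test points and comparing with the affine structure of $V$.
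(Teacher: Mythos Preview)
Your enumeration of the subgroups $L$ with $K \leqslant L \leqslant H$ is correct and the strategy of ruling them out is sound, but the construction of $x$---the step you rightly identify as the main obstacle---rests on a false claim. You assert that the centraliser of $K$ in $\symon{V}$ lies inside $K$, and hence that any $x \in \normaliser{G}{K}$ outside $K$ must induce an outer automorphism of $K$. This is not so: take any $\lambda \in \finitefield{p}^\times \setminus \{1\}$ and define $x$ by $\vv^x = \lambda\vv$ for $\vv \in W$ and $\vv^x = \vv$ otherwise. Since every $g \in K$ is linear and preserves $W$, one checks immediately that $g^x = g$, so $x$ centralises $K$; yet $x$ is not affine (for $\vw \in W \setminus\{\vzero\}$ and $\vv \notin W$ we have $(\vw+\vv)^x = \vw + \vv \neq \lambda\vw + \vv = \vw^x + \vv^x$, while $\vzero^x = \vzero$), so $x \notin H$. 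Your proposed lift of an outer automorphism is therefore both unnecessary and, as written, too vague to constitute a construction. The good news is that this centralising $x$ slots directly into your framework: for any $\vw \in W \setminus \{\vzero\}$ the conjugate $x t_\vw x^{-1}$ translates points of $W$ by $\lambda^{-1}\vw$ but points of $V \setminus W$ by $\vw$, hence is not affine, ruling out $K \ltimes W$ and $K \ltimes V$; a similar direct check with a suitable $g \in \glon{V} \setminus K$ rules out $\glon{V}$.

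For comparison, the paper uses this same element $x$ but bypasses the classification of overgroups of $K$ entirely. It shows directly that $K = \centraliser{H}{x}$ and then proves $H \cap H^x \leqslant \centraliser{H}{x}$: if some $h \in H \cap H^x$ had $h' \coloneqq xhx^{-1}h^{-1} \neq 1$, then $\fix(h')$ would be an affine subspace of dimension at most $d-1$, whereas every vector outside $(W \setminus \{\vzero\}) \cup W^{h^{-1}}$ is fixed by $h'$; counting gives $p^d \leqslant 3p^{d-1} - 1$, impossible for $p \geqslant 3$.
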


\begin{proof}
    Let $\lambda \in \finitefield{p}^\times \setminus \{1\}$ and define $x \in \symon{V}$ by setting
    \[
        \vv^x \coloneqq \begin{cases}
            \lambda \vv, & \text{if } \vv \in W, \\
            \vv,         & \text{otherwise}.
        \end{cases}
    \]
    We first show that $K = \centraliser{H}{x}$ and then that $H \cap H^x = \centraliser{H}{x}$. 

    Firstly, let $g \in K$.
    For all $\vv \in W$, we calculate that $\vv^{g^{x}} = (\lambda^{-1} \vv)^{g x} = (\lambda^{-1} \vv^g)^x = \vv^g$.
    For all $\vv \in V \setminus W$, we see that $\vv^{g^{x}} = \vv^{g x} = \vv^g$.
    Hence $g^x = g$, and so $K \leqslant \centraliser{H}{x}$.
    Now, let $h$ be an element of $\centraliser{H}{x}$ and write $h = g t_\vu$ with $g \in \glon{V}$ and $\vu \in V$, so that $h^{-1} = t_{-\vu} g^{-1}$.
    Suppose for a contradiction that there exists $\vv \in W \setminus \{ \vzero \}$ with $\lambda \vv^g + \vu \notin W$.
    Then
    \[ \vv = \vv^{x h x^{-1} h^{-1}} = (\lambda \vv)^{h x^{-1} h^{-1}} = (\lambda \vv^g + \vu)^{x^{-1} h^{-1}} = (\lambda \vv^g + \vu)^{h^{-1}} = \lambda \vv. \]
    Since $\lambda \neq 1$, this is a contradiction and so for all $\vv \in W$,
    \[ \vv = (\lambda \vv^g + \vu)^{x^{-1} h^{-1}} = (\vv^g + \lambda^{-1} \vu)^{h^{-1}} = \vv + (\lambda^{-1} - 1) \vu^{g^{-1}}. \]
    Hence $\vu = \vzero$ and $\vv^g \in W$.
    Therefore, $h = g t_\vzero$ stabilises $W$, whence $h \in K$.
    Thus, $\centraliser{H}{x} = K$.

    Since $\centraliser{H}{x} \leqslant H \cap H^x$, it remains to show that $H \cap H^x \leqslant \centraliser{H}{x}$.
    Suppose otherwise.
    Then there is some $h \in H \cap H^x$ such that $h' \coloneqq x h x^{-1} h^{-1} \neq 1$.
    The set $\fix(h')$ is either empty or an affine subspace of dimension at most $d - 1$.
    Moreover, for any $\vv \in V$, if $\vv \notin (W \setminus \{\vzero\}) \cup W^{h^{-1}}$, then $x$ fixes both $\vv$ and $\vv^h$, and $\vv^{h'} = \vv^{h x^{-1} h^{-1}} = \vv^{h h^{-1}} = \vv$, whence $\vv \in \fix(h')$.
    Therefore,
    \[ V = (W \setminus \{\vzero\}) \cup W^{h^{-1}} \cup \fix(h'). \]
    Then
    \[
        p^d = \cardinality{V} \leqslant \cardinality{W \setminus \{\vzero\}} + \cardinality{W^{h^{-1}}} + \cardinality{\fix(h')} \leqslant (p^{d - 1} - 1) + p^{d - 1} + p^{d - 1} = 3 p^{d - 1} - 1.
    \]
    This is a contradiction as $p \geqslant 3$, and so $H \cap H^x = \centraliser{H}{x} = K$.
\end{proof}

We now construct a long chain of subgroups of $G$ by intersecting subspace stabilisers.

\begin{lemma}
    \label{lemma:diagonal-subgroup-as-intersection-of-subspace-stabilisers}
    Suppose $d \geqslant 2$ and $G = \symon{V}$.
    Let $l_1 \coloneqq d (d + 1) / 2 - 1$.
    Then there exist stabilisers $K_1, \ldots, K_{l_1}$ in $\glon{V}$ of linear subspaces such that
    \begin{equation}
        \label{equation:diagonal-subgroup-as-intersection-of-subspace-stabilisers}
        G > H > K_1 > K_1 \cap K_2 > \cdots > \bigcap_{i = 1}^{l_1} K_i = T.
    \end{equation}
\end{lemma}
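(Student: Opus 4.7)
The plan is to build the chain explicitly by choosing subspaces whose stabilisers successively zero off-diagonal entries of matrices in $\glon{V}$, ending at the diagonal subgroup $T$. Fixing the basis $\vb_1, \ldots, \vb_d$, I identify elements of $\glon{V}$ with invertible $d \times d$ matrices $(a_{i'j'})$. For any subset $J \subseteq \{1, \ldots, d\}$, the $\glon{V}$-stabiliser of the coordinate subspace $\generatedby{\vb_j : j \in J}$ equals $\{g \in \glon{V} : a_{i'j'} = 0 \text{ whenever } j' \in J \text{ and } i' \notin J\}$. Intersecting stabilisers of coordinate subspaces thus amounts to imposing a growing set of zero-constraints on off-diagonal entries, and $T$ is their common stabiliser.

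For $i \in \{1, \ldots, d\}$ and $1 \leqslant k \leqslant d - i + 1$, further restricted to $k \leqslant d - 1$ when $i = 1$, set $W_{i,k} \coloneqq \generatedby{\vb_i, \vb_{i+1}, \ldots, \vb_{i+k-1}}$. A direct count gives $(d-1) + \sum_{i=2}^d (d - i + 1) = (d-1)(d+2)/2 = l_1$ such subspaces; I list them in lexicographic order of $(i, k)$ as $W_1, \ldots, W_{l_1}$, and let $K_j$ be the $\glon{V}$-stabiliser of $W_j$. The first two links $G > H > K_1$ hold because $H$ is proper in $G$ by hypothesis and $K_1 \leqslant \glon{V} < H$, the latter inclusion being strict since $H$ contains non-trivial translations and $\glon{V}$ does not.

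The heart of the argument is to show that the inner chain $K_1 > K_1 \cap K_2 > \cdots > \bigcap_{j=1}^{l_1} K_j$ is strict and equals $T$ at the end. I would track the growing set of forbidden off-diagonal entries as each $W_j$ is introduced. The $d - 1$ subspaces with $i = 1$ extend the standard flag $\generatedby{\vb_1} < \generatedby{\vb_1, \vb_2} < \cdots < \generatedby{\vb_1, \ldots, \vb_{d-1}}$; step $(1, k)$ introduces the new entries $(j', k)$ with $j' > k$, so every lower-triangular off-diagonal entry is forbidden after the $i = 1$ phase. For each $i \geqslant 2$ and $k \in \{1, \ldots, d - i + 1\}$, step $(i, k)$ contributes precisely the single new entry $(i - 1, i + k - 1)$. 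Processing every $(i, k)$ forbids all off-diagonal entries, giving $\bigcap_{j=1}^{l_1} K_j = T$, and prepending $G > H > K_1$ yields a strict chain of length $l_1 + 1$ as required. The main obstacle is the bookkeeping in the ``new entry'' claim for $i \geqslant 2$: I must check that no earlier $W_{i', k'}$ in the lexicographic order has index set $\{i', \ldots, i' + k' - 1\}$ containing $i + k - 1$ but not $i - 1$. A short case analysis suffices: $i' = 1$ is excluded because $\{1, \ldots, k'\}$ contains $i + k - 1$ only if it also contains $i - 1$; $i' = i$ forces $k' \geqslant k$, contradicting the lexicographic order; and $2 \leqslant i' < i$ would require both $k' \geqslant i + k - i'$ and $k' < i - i'$, which is impossible.
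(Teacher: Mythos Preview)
Your proof is correct and uses exactly the same family of subspaces as the paper, in the same lexicographic order: under the reparametrisation $j = i + k - 1$, your $W_{i,k}$ is the paper's $\langle \vb_i,\ldots,\vb_j\rangle$, and your exclusion of $(1,d)$ matches the paper's removal of $(1,d)$ from $\mathcal{I}$. The only difference is presentational: the paper witnesses each strict inclusion by writing down an explicit transvection $g_{i,j}$, whereas you phrase the same idea as ``a new off-diagonal entry becomes forbidden'' (the elementary matrix $I + E_{r,c}$ for that entry being precisely such a witness). Your case analysis for $i\geqslant 2$ is accurate --- the step $(i,k)$ really does add the single new upper-triangular constraint $(i-1,\,i+k-1)$ --- and this, together with the $i=1$ phase, accounts for all $d(d-1)$ off-diagonal positions, giving $\bigcap_j K_j = T$.
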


\begin{proof}
    Let $\mathcal{I} \coloneqq \{(i, j) \mid i, j \in \{1, \ldots, d\}, i \leqslant j\}\setminus \{ (1, d) \}$ be ordered lexicographically.
    Note that $\cardinality{\mathcal{I}} = l_1$.
    For each $(i, j) \in \mathcal{I}$, let $K_{i, j}$ be the stabiliser in $\glon{V}$ of $\generatedby{\vb_i, \vb_{i + 1} \ldots, \vb_j}$ and define
    $ \mathcal{I}_{i, j} \coloneqq \{ (k, l) \in \mathcal{I} \mid (k, l) \leqslant (i, j) \}. $
    Since $T \leqslant K_{i, j}$ for all $i, j$, we see that
    \begin{equation}
        \nonumber
        T \leqslant \bigcap_{(i, j) \in \mathcal{I}} K_{i, j} \leqslant \bigcap_{i = 1}^{d} K_{i, i} = T.
    \end{equation}
    Hence equality holds, proving the final equality in \eqref{equation:diagonal-subgroup-as-intersection-of-subspace-stabilisers}.

    We now show that, for all $(i, j) \in \mathcal{I}$,
    \[ \bigcap_{\mathclap{(k, l) \in \mathcal{I}_{(i, j)} \setminus \{(i, j)\}}} \; K_{k, l} \;  >  \quad \bigcap_{\mathclap{(k, l) \in \mathcal{I}_{(i, j)}}} \; K_{k, l}. \]
    For $1 \leqslant j < d$, let $g_{1, j}$ be the linear map that sends $\vb_j$ to $\vb_j + \vb_{j + 1}$ and fixes $\vb_k$ for $k \neq j$.
    Then $g_{1, j}$ stabilises $\generatedby{\vb_1}, \ldots, \generatedby{\vb_{j - 1}}$ and any sum of these subspaces, but not $\generatedby{\vb_1, \ldots, \vb_j}$.
    Hence $g_{1, j} \in K_{1, l}$ for all $l < j$ but $g_{1, j} \notin K_{1, j}$.
    For $2 \leqslant i \leqslant j \leqslant d$, let $g_{i, j}$ be the linear map that sends $\vb_j$ to $\vb_{i - 1} + \vb_j$ and fixes $\vb_k$ for $k \neq j$.
    Then $g_{i, j}$ stabilises $\generatedby{\vb_1}, \ldots, \generatedby{\vb_{j - 1}}, \generatedby{\vb_j, \vb_{i - 1}}, \allowbreak \generatedby{\vb_{j + 1}}, \ldots, \generatedby{\vb_d}$ and any sum of these subspaces, but not $\generatedby{\vb_i, \ldots, \vb_j}$.
    Hence $g_{i, j} \in K_{k, l}$ for all $(k, l) < (i, j)$ but $g_{i, j} \notin K_{i, j}$.

    Therefore, the $K_{i, j}$'s, ordered lexicographically by the subscripts, are as required.
\end{proof}

We have now found the initial segment of an irredundant base of $\symon{V}$.
The next subsection extends this to a base.

\subsection{Subgroups of the diagonal subgroup}
\label{section:subgroups-of-diagonal-subgroup}

We now show that, with certain constraints on $p$, every subgroup of $T$ is an intersection of $G$-conjugates of $T$, and hence, by \Cref{lemma:diagonal-subgroup-as-intersection-of-subspace-stabilisers}, an intersection of $G$-conjugates of $H$.
We first prove a useful result about subgroups of the symmetric group generated by a $k$-cycle.

\begin{lemma}
    \label{lemma:cyclic-conjugates-give-any-subgroup}
    Let $s \in \symgroup{m}$ be a cycle of length $k < m$ and let $a$ be a divisor of $k$.
    Suppose that $(k, a) \neq (4, 2)$.
    Then there exists $x \in \symgroup{m}$ such that
    \[ \generatedby{s} \cap \generatedby{s}^x = \generatedby{s^a}. \]
\end{lemma}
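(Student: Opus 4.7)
The plan is to construct $x \in \symgroup{m}$ explicitly, writing $s = \cycleoneto{k}$ without loss of generality and using $k < m$ to supply a spare point $k + 1$.

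For the extremes: if $a = 1$, take $x = 1$ so $\generatedby{s} \cap \generatedby{s}^{x} = \generatedby{s} = \generatedby{s^{a}}$. If $a = k$, take $x = (k~k+1)$, defined since $k < m$; then $s^{x}$ is a $k$-cycle on $\{1, \ldots, k-1, k+1\}$, whose support differs from $\supp(s)$, and since every non-identity power of a $k$-cycle has full support, $\generatedby{s} \cap \generatedby{s}^{x}$ is trivial, which equals $\generatedby{s^{k}}$.

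The main case is $1 < a < k$. Here I would take $x$ to be a single cycle of $s^{a}$, specifically $x \coloneqq (1~a+1~2a+1~\cdots~k-a+1) \in \symgroup{k} \leqslant \symgroup{m}$. Since $x$ centralises $s^{a}$, the conjugate $\sigma \coloneqq s^{x}$ satisfies $\sigma^{a} = s^{a}$, hence $\generatedby{s^{a}} \leqslant \generatedby{s} \cap \generatedby{\sigma}$. The intersection is a subgroup of the cyclic group $\generatedby{s}$, so it equals $\generatedby{s^{a'}}$ for some $a' \mid a$; to force $a' = a$ it suffices to show, for every prime $p$ dividing $a$, that $s^{a/p} \notin \generatedby{\sigma}$. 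Since $\generatedby{\sigma}$ is cyclic of order $k$, its unique subgroup of order $kp/a$ is $\generatedby{\sigma^{a/p}} = x \generatedby{s^{a/p}} x^{-1}$, so the required condition is $x s^{a/p} x^{-1} \notin \generatedby{s^{a/p}}$. I would verify this by evaluating $y \coloneqq x s^{a/p} x^{-1}$ at the points $1$ and $2$ and comparing the resulting constraints on $j$ in a hypothetical $y = s^{(a/p) j}$. Point $1$ always yields $j \equiv 1 - p \pmod{kp/a}$. When $a \geqslant 3$, both $2$ and $2 + a/p$ lie outside $\supp(x)$, so $y(2) = 2 + a/p$, forcing $j \equiv 1 \pmod{kp/a}$; these congruences are incompatible unless $k = a$, against $a < k$. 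When $a = 2$ the only prime divisor is $p = 2$, so $a/p = 1$ and $3 \in \supp(x)$; a direct calculation gives $y(2) = 5$, hence $j \equiv 3 \pmod{k}$, which matches $j \equiv k - 1$ only when $k \mid 4$, and then $2 \mid k$ with $k > 2$ forces $k = 4$.

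The main obstacle will be the final modular bookkeeping: handling the $a = 2$ case carefully and confirming that $(k, a) = (4, 2)$ is the sole genuine exception across all primes $p \mid a$. The structural reason for the exception is transparent: the unique nontrivial cycle of $s^{2}$ in $\symgroup{4}$ is the transposition $(1~3)$, which conjugates the $4$-cycle $s$ to $s^{-1} \in \generatedby{s}$, so $\sigma$ cannot escape $\generatedby{s}$; for every other $(k, a)$ a cycle of $s^{a}$ moves enough points to avoid the normaliser of each intermediate cyclic subgroup.
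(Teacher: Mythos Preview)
Your argument is correct and follows the same overall strategy as the paper---choose an $x$ centralising $s^{a}$ and then rule out any further powers of $s$ in the intersection---but the concrete choices differ. The paper takes
\[
x = (1~2~\cdots~a)(a+1~\cdots~2a)\cdots(k-a+1~\cdots~k),
\]
a product of $k/a$ disjoint $a$-cycles, whereas you use a single $(k/a)$-cycle of $s^{a}$. For the verification, the paper supposes $(s^{b})^{x} \in \generatedby{s}$ for an arbitrary $b \in \{1,\ldots,a-1\}$ and evaluates at the points $1$, $2$, and (in the boundary case $b=a-1$, $k=2a$) at $a$; you instead reduce to prime divisors $p\mid a$ and evaluate $y = x\, s^{a/p}\, x^{-1}$ at $1$ and $2$. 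Your reduction to primes is a clean structural shortcut the paper does not take; conversely the paper's uniform treatment of all $b$ avoids your separate $a=2$ branch. The two routes are of comparable length and difficulty.

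One point to tighten: in your $a=2$ case, the claim $y(2)=x(3)=5$ presupposes $k\geqslant 6$, since for $k=4$ one has $x=(1~3)$ and $x(3)=1$, whence the two congruences on $j$ agree rather than clash. This is harmless, because the hypothesis $(k,a)\neq(4,2)$ together with $2=a\mid k$ and $a<k$ already forces $k\geqslant 6$; simply insert that observation before computing $y(2)$ and your argument stands as written.
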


\begin{proof}
    Without loss of generality, assume $s = \cycleoneto{k}$ and $a > 1$.
    If $a = k$, then take $x \coloneqq (1~m)$, so that $\generatedby{s} \cap \generatedby{s}^x = 1$, as $m \notin \supp(s^i)$ and $m \in \supp((s^i)^x)$ for all $1 \leqslant i < k$.
    Hence we may assume $a < k$ and $k \neq 4$.
    We find that
    \[ s^a = (1~~a + 1~~\cdots~~k - a + 1) (2~~a + 2~~\cdots~~k - a + 2) \cdots (a~~2a~~\cdots~~k). \]
    Let
    \[ x \coloneqq (1~~2~~\cdots~~a) (a + 1~~a + 2~~\cdots~~2a) \cdots (k - a + 1~~k - a + 2~~\cdots~~k). \]
    Then $\left( s^a \right)^x = s^a$.
    Hence $\generatedby{s^a} = \generatedby{s^a}^x \leqslant \generatedby{s} \cap \generatedby{s}^x$.

    To prove that equality holds, suppose $\generatedby{s^a} < \generatedby{s} \cap \generatedby{s}^x$.
    Then there exists $b \in \{1, \ldots, a - 1\}$ such that $(s^b)^x = s^c$ for some $c$ not divisible by $a$.
    Computing
    \[
        1^{s^c} = 1^{x^{-1} s^b x} = a^{s^b x} = (a + b)^{x} = a + b + 1 = 1^{s^{a + b}}.
    \]
    Therefore,
    \begin{equation}
        \label{equation:cyclic-subgroup-proof-1}
        2^{s^c} = 2^{s^{a + b}} = \begin{cases}
            a + b + 2, & \text{if } b \neq a - 1 \text{ or } k > 2a, \\
            1,         & \text{if } b = a - 1 \text{ and } k = 2a.
        \end{cases}
    \end{equation}
    On the other hand,
    \begin{equation}
        \label{equation:cyclic-subgroup-proof-2}
        2^{x^{-1} s^b x} = 1^{s^b x} = (b + 1)^{x} = \begin{cases}
            b + 2, & \text{if } b \neq a - 1, \\
            1,     & \text{if } b = a - 1.
        \end{cases}
    \end{equation}
    Comparing \eqref{equation:cyclic-subgroup-proof-1} and \eqref{equation:cyclic-subgroup-proof-2}, we see that $b = a - 1$ and $k = 2a$.
    In particular, $a \neq 2$ by the assumption that $k \neq 4$.
    It follows that $a^{s^c} = a^{s^{a + b}} = a - 1$,
    whereas
    \[
        a^{x^{-1} s^b x} = (a - 1)^{s^b x} = (2a - 2)^{x} = 2a - 1,
    \]
    a contradiction.
    The result follows.
\end{proof}

Recall from \Cref{section:subspace-stabilisers} the subgroup $T$ of $\glon{V}$ and the primitive element $\mu$ of $\finitefield{p}$.
For each $i \in \{1, \ldots, d\}$, let $g_i \in \glon{V}$ send $\vb_i$ to $\mu \vb_i$ and fix $\vb_j$ for $j \neq i$.
Then $T = \generatedby{g_1, \ldots, g_d}$.

\begin{lemma}
    \label{lemma:subgroups-of-diagonal-subgroup-as-intersection}
    Suppose $d \geqslant 1$, $p \geqslant 3$, and $G = \symon{V}$.
    Let $i \in \{1, \ldots, d\}$ and let $a$ be a divisor of $(p - 1)$ with $(p, a) \neq (5, 2)$.
    Then there exists $x \in G$ such that
    \[ T \cap T^x = \generatedby{g_1, \ldots, g_{i - 1}, {g_i}^a, g_{i + 1}, \ldots, g_d}. \]
\end{lemma}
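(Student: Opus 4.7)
The plan is to apply \Cref{lemma:cyclic-conjugates-give-any-subgroup} on the line $L \coloneqq \generatedby{\vb_i}$. On $L$ the element $g_i$ fixes $\vzero$ and cycles the $p - 1$ nonzero scalar multiples of $\vb_i$, so $g_i|_L$ is a $(p - 1)$-cycle in $\symon{L} \cong \symgroup{p}$. Since $(p - 1, a) \neq (4, 2)$ by hypothesis, \Cref{lemma:cyclic-conjugates-give-any-subgroup} supplies $y \in \symon{L}$ with $\generatedby{g_i|_L} \cap \generatedby{g_i|_L}^y = \generatedby{(g_i|_L)^a}$, and the proof of that lemma moreover produces $y$ that centralises $(g_i|_L)^a$. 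Extend $y$ to $x \in \symon{V} = G$ by letting $x$ fix $V \setminus L$ pointwise.

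For each $j \neq i$, the element $g_j$ fixes $L$ pointwise (as $\vb_i$ has zero $j$-th coordinate) and preserves $V \setminus L$, so $g_j$ and $x$ have disjoint supports and commute; hence $g_j \in T \cap T^x$. Similarly, since $y$ centralises $(g_i|_L)^a$ and $x$ is trivial off $L$ while $g_i^a$ preserves $L$, one computes $xg_i^ax^{-1} = g_i^a$, so $g_i^a \in T \cap T^x$. This establishes $K \coloneqq \generatedby{g_1, \ldots, g_{i-1}, g_i^a, g_{i+1}, \ldots, g_d} \leqslant T \cap T^x$.

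For the reverse inclusion, I would show that $g_i^e \in T^x$ implies $a \mid e$ modulo $p - 1$. Suppose $xg_i^ex^{-1} = t = \prod_l g_l^{f_l} \in T$. Since $x$ is trivial on $V \setminus L$, the element $t$ must agree with $g_i^e$ there. When $d \geqslant 2$, comparing the actions on $\vb_l$ for each $l \neq i$ forces $f_l \equiv 0 \pmod{p - 1}$, and comparing on $\vb_i + \vb_l$ forces $f_i \equiv e \pmod{p - 1}$, so $t = g_i^e$; the case $d = 1$ gives this trivially as $V = L$. Hence $yg_i^ey^{-1}$ agrees with $g_i^e$ on $L$, so $(g_i|_L)^e \in \generatedby{g_i|_L}^y \cap \generatedby{g_i|_L} = \generatedby{(g_i|_L)^a}$ by \Cref{lemma:cyclic-conjugates-give-any-subgroup}, giving $a \mid e$. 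Finally, for arbitrary $t = \prod_l g_l^{e_l} \in T \cap T^x$, dividing by $\prod_{l \neq i} g_l^{e_l} \in K$ leaves $g_i^{e_i} \in T \cap T^x$, so $a \mid e_i$ and $t \in K$.

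The main obstacle is the step pinning $t$ down to $g_i^e$, rather than some other $T$-element that merely matches $yg_i^ey^{-1}$ on $L$; once the action of $T$ on vectors outside $L$ is exploited to force $t = g_i^e$, the result reduces to a direct application of \Cref{lemma:cyclic-conjugates-give-any-subgroup} on the line $L$.
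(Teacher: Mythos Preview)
Your argument is correct, but it takes a different route from the paper and leans on a fact that the paper's proof does not need.

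The paper also starts from \Cref{lemma:cyclic-conjugates-give-any-subgroup} applied to the $(p-1)$-cycle $s$ on $\generatedby{\vb_i}$, but it extends the resulting permutation $y$ to $V$ \emph{coset-by-coset}: writing $V = \generatedby{\vb_i} \oplus \generatedby{\vb_j : j \neq i}$, it sets $(\vu + \vw)^{\tilde{x}} \coloneqq \vu^y + \vw$. With this choice, for any $g = g_i^{c} g' \in T$ one has $(\vu + \vw)^{g^{\tilde{x}}} = \vu^{(s^c)^y} + \vw^{g'}$, so $g^{\tilde{x}} \in T$ if and only if $(s^c)^y \in \generatedby{s}$, and \Cref{lemma:cyclic-conjugates-give-any-subgroup} gives this exactly when $a \mid c$. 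No case split on $d$, no need to pin down $t$, and crucially only the \emph{statement} of \Cref{lemma:cyclic-conjugates-give-any-subgroup} is used. By contrast, your extension of $y$ (identity off $L$) forces you to invoke the extra fact, read off from the \emph{proof} of \Cref{lemma:cyclic-conjugates-give-any-subgroup}, that $y$ centralises $s^a$: without it, $x g_i^{a} x^{-1}$ acts as $g_i^{a}$ on $V \setminus L$ but as a different power of $s$ on $L$, and this map is not linear, so $g_i^{a} \notin T^x$ and the forward inclusion would fail.

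One small wrinkle: for $d = 1$ you assert ``$t = g_i^{e}$ trivially as $V = L$'', but $V = L$ only gives $y g_i^{e} y^{-1} = t \in \generatedby{g_i}$, not $t = g_i^{e}$. This is harmless, since what you actually need is $(g_i|_L)^{e} \in \generatedby{g_i|_L} \cap \generatedby{g_i|_L}^{y}$, and that follows immediately from $t \in \generatedby{g_i}$; but the sentence as written overclaims.
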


\begin{proof}
    Up to a change of basis, $i = 1$.
    The map $g_1 \in \glon{V} < G$ has a cycle $s = (\vb_1~\mu \vb_1~\mu^2 \vb_1 \allowbreak~\cdots~\mu^{p - 2} \vb_1)$.
    Treating $s$ as a permutation on the subspace $\generatedby{\vb_1}$, we see that, for all $\vu \in \generatedby{\vb_1}$ and $\vw \in \generatedby{\vb_2, \ldots, \vb_d}$ (if $d = 1$, then consider $\vw = \vzero$),
    \[ (\vu + \vw)^{g_1} = \vu^{g_1} + \vw = \vu^s + \vw. \]
    By \Cref{lemma:cyclic-conjugates-give-any-subgroup}, since $s$ is a $(p - 1)$-cycle and $(p - 1, a) \neq (4, 2)$, there exists $x \in \symon{\generatedby{\vb_1}}$ such that $\generatedby{s} \cap \generatedby{s}^x = \generatedby{s^a}$.
    Define $\tilde{x} \in G$ by setting
    \[ (\vu + \vw)^{\tilde{x}} \coloneqq \vu^x + \vw \]
    for all $\vu \in \generatedby{\vb_1}$ and $\vw \in \generatedby{\vb_2, \ldots, \vb_d}$.
    Let $g$ be any element of $T$ and write $g = g_1^c g'$ with $c \in \{1, \ldots, p - 1\}$ and $g' \in \generatedby{g_2, \ldots, g_d}$.
    Then, with $\vu, \vw$ as above,
    \[ (\vu + \vw)^g = \vu^{g_1^c} + \vw^{g'} = \vu^{s^c} + \vw^{g'} \]
    and similarly
    \[ (\vu + \vw)^{g^{\tilde{x}}} = \vu^{(s^c)^x} + \vw^{g'}. \]
    Hence $g^{\tilde{x}} \in T$ if and only if $(s^c)^x \in \generatedby{s}$, which holds if and only if $a \mid c$.
    Therefore,
    $ T \cap T^{\tilde{x}} = \generatedby{g_1^a, g_{2}, \ldots, g_d}, $
    as required.
\end{proof}

\begin{lemma}
    \label{lemma:chain-in-diagonal-subgroup}
    Suppose $d \geqslant 1$, $p \geqslant 3$, and $G = \symon{V}$.
    Let $l_2 \coloneqq d$ if $p = 3, 5$, and $l_2 \coloneqq d \, \Omega(p - 1)$ otherwise.
    Then there are subsets $Y_1, \ldots, Y_{l_2} \subseteq G$ such that
    \begin{equation}
        \nonumber
        T > \bigcap_{x \in Y_1} T^x > \bigcap_{x \in Y_2} T^x > \cdots > \bigcap_{x \in Y_{l_2}} T^x = 1.
    \end{equation}
\end{lemma}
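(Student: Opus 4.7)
The plan is to construct the chain inside the abelian subgroup $T = \generatedby{g_1} \times \cdots \times \generatedby{g_d} \cong C_{p-1}^d$ by shrinking the ``axis-aligned'' subgroups $T_{(a_1, \ldots, a_d)} \coloneqq \generatedby{g_1^{a_1}, \ldots, g_d^{a_d}}$ indexed by tuples of divisors of $p - 1$. The key input is \Cref{lemma:subgroups-of-diagonal-subgroup-as-intersection}, which realises each single-coordinate truncation $\generatedby{g_1, \ldots, g_{i-1}, g_i^{a}, g_{i+1}, \ldots, g_d}$ as $T \cap T^x$ for a suitable $x \in G$, provided $(p, a) \neq (5, 2)$.

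First I would establish the combinatorial identity
\[ T_{(a_1, \ldots, a_d)} = T \cap \bigcap_{i=1}^{d} \generatedby{g_1, \ldots, g_{i-1}, g_i^{a_i}, g_{i+1}, \ldots, g_d}, \]
which is immediate from the direct product decomposition of $T$: a monomial $g_1^{b_1} \cdots g_d^{b_d}$ lies in every factor of the right-hand intersection if and only if $a_i \mid b_i$ for each $i$. Combined with \Cref{lemma:subgroups-of-diagonal-subgroup-as-intersection}, this realises $T_{(a_1, \ldots, a_d)}$ as $\bigcap_{x \in Y} T^x$ for some finite $Y \subseteq G$ containing the identity, whenever every $a_i$ is \emph{admissible}, meaning $(p, a_i) \neq (5, 2)$.

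The main work is then to exhibit a sequence of admissible tuples starting at $(1, \ldots, 1)$ and terminating at $(p - 1, \ldots, p - 1)$, along which $\cardinality{T_{(a_1, \ldots, a_d)}} = \prod_{i} (p - 1)/a_i$ is strictly decreasing. When $p = 3$ or $p \geqslant 7$, every divisor of $p - 1$ is admissible, so I would sweep through the coordinates in turn, multiplying each $a_i$ successively by the prime factors of $p - 1$ with multiplicity; this gives exactly $d \, \Omega(p - 1)$ descents. For $p = 5$ the only admissible divisors of $4$ are $1$ and $4$, so each coordinate must jump directly from $1$ to $4$, giving $d$ descents in total. The subsets $Y_j$ are assembled from the identity together with the conjugating elements supplied by \Cref{lemma:subgroups-of-diagonal-subgroup-as-intersection} for the coordinates being modified at stage $j$.

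The main obstacle is the $p = 5$ case: the exclusion $(p, a) \neq (5, 2)$ rules out the intermediate divisor $a = 2$, so the chain cannot pass through $a_i = 2$ at any coordinate, and this is precisely why the lemma asserts $l_2 = d$ rather than $d \, \Omega(4) = 2d$ in this case. Fortunately $a = 4$ remains admissible and equals $p - 1$, so each coordinate can still be collapsed in a single step; the remaining verifications amount to routine bookkeeping using the direct product structure of $T$.
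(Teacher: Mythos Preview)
Your proposal is correct and follows essentially the same route as the paper: both arguments repeatedly invoke \Cref{lemma:subgroups-of-diagonal-subgroup-as-intersection} to shrink one cyclic factor of $T \cong C_{p-1}^d$ at a time along a divisor chain, treating $p = 5$ separately because the intermediate divisor $a = 2$ is inadmissible there. The only cosmetic difference is that the paper groups $p = 3$ together with $p = 5$ (collapsing each coordinate in one step from $1$ to $p-1$) while you group $p = 3$ with $p \geqslant 7$; since $\Omega(2) = 1$ these give the same chain.
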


\begin{proof}
    First, suppose $p = 3$ or $p = 5$.
    For all $i \in \{1, \ldots, d\}$, by \Cref{lemma:subgroups-of-diagonal-subgroup-as-intersection}, there exists $y_i \in G$ such that
    \[ T \cap T^{y_{i}} = \generatedby{g_1, \ldots, g_{i - 1}, g_{i + 1}, \ldots, g_d}; \]
    setting $Y_i \coloneqq \{y_1, \ldots, y_i\}$ gives
    \[ \bigcap_{x \in Y_i} T^x = \generatedby{g_{i + 1}, \ldots, g_d}. \]
    Therefore, $Y_1, \ldots, Y_d$ are as required.

    Now, suppose $p \geqslant 7$.
    Let $a_1, \ldots, a_{\Omega(p - 1)}$ be a sequence of factors of $(p - 1)$ such that $a_i \mid a_{i + 1}$ for all $i$.
    Let $\mathcal{I} \coloneqq \{1, \ldots, d\} \times \{1, \ldots, \Omega(p - 1) \}$ be ordered lexicographically.
    For each pair $(i, j) \in \mathcal{I}$, by \Cref{lemma:subgroups-of-diagonal-subgroup-as-intersection}, there exists $y_{i, j} \in G$ such that
    \[ T \cap T^{y_{i, j}} = \generatedby{g_1, \ldots, g_{i - 1}, {g_i}^{a_j}, g_{i + 1}, \ldots, g_d}; \]
    setting $Y_{i, j} \coloneqq \{ y_{i', j'} \mid (i', j') \in \mathcal{I}, (i', j') < (i, j) \}$ gives
    \[ \bigcap_{\mathclap{x \in Y_{i, j}}} \, T^x \; = \generatedby{{g_{i}}^{a_j}, g_{i +1 }, \ldots, g_d}. \]
    Therefore, the $Y_{i, j}$'s, ordered lexicographically by the subscripts, are as required.
\end{proof}

This completes our preparations for the proof of \Cref{theorem:affine-case}.

\subsection{Proof of Theorem \ref{theorem:affine-case}}
\label{section:proof-of-affine-case-theorem}

Recall the assumption that $G$ is $\symgroup{p^d}$ or $\altgroup{p^d}$ ($p$ is an odd prime and $p^d \geqslant 7$), which we identify here with $\symon{V}$ or $\alton{V}$, and $H = \agl{d}{p} \cap G$, which we identify with $\aglon{V} \cap G$.

\begin{proof}[Proof of \Cref{theorem:affine-case}]
    First, suppose $d \geqslant 2$, $p \geqslant 3$, and $G = \symon{V}$.
    Let $K_1, \ldots, K_{l_1}$ be as in \Cref{lemma:diagonal-subgroup-as-intersection-of-subspace-stabilisers}.
    For each $i \in \{1, \ldots, l_1\}$, by \Cref{lemma:subspace-stabiliser-is-intersection-of-conjugates}, there exists $x_i \in G$ such that $H \cap H^{x_i} = K_i$.
    Define
    $ X_i \coloneqq \{ 1 \} \cup \{ x_j \mid 1 \leqslant j < i \} \subseteq G $
    for all such $i$.
    Then by \Cref{lemma:diagonal-subgroup-as-intersection-of-subspace-stabilisers},
    \begin{equation}
        \label{equation:affine-case-theorem-1}
        G > H = \bigcap_{x \in X_1} H^x > \bigcap_{x \in X_2} H^x > \cdots > \bigcap_{x \in X_{l_1 + 1}} H^x = T.
    \end{equation}
    Let $Y_1, \ldots, Y_{l_2} \subseteq G$ be as in \Cref{lemma:chain-in-diagonal-subgroup}.
    For each $i \in \{1, \ldots, l_2\}$, let
    $ Z_i \coloneqq \{ x y \mid x \in X_{l_1 + 1}, y \in Y_i \}, $
    so that
    \[ \bigcap_{z \in Z_i} H^z = \bigcap_{y \in Y_i} \left( \bigcap_{x \in X_{l_1 + 1}} H^x \right)^y = \bigcap_{y \in Y_i} T^y. \]
    Then \Cref{lemma:chain-in-diagonal-subgroup} gives
    \begin{equation}
        \label{equation:affine-case-theorem-2}
        T > \bigcap_{z \in Z_{1}} H^x > \bigcap_{z \in Z_{2}} H^x > \cdots > \bigcap_{z \in Z_{l_2}} H^x = 1.
    \end{equation}
    Concatenating the chains \eqref{equation:affine-case-theorem-1} and \eqref{equation:affine-case-theorem-2}, we obtain a chain of length $l_1 + l_2 + 1$.

    Now, suppose $d \geqslant 2$, $p \geqslant 3$, and $G$ is $\symon{V}$ or $\alton{V}$.
    By \Cref{lemma:mibs-is-stabiliser-chain-length} and \Cref{lemma:sym-sandwiches-alt}, since $\aglon{V} \nleqslant \alton{V}$, the lower bounds in the theorem hold.
    For the upper bound on $\mibs{G}{H}$, simply compute
    \begin{align*}
        \mibs{G}{H} & \leqslant 1 + \Omega(\cardinality{H}) \leqslant \Omega(p^d (p^d - 1) (p^d - p) \cdots (p^d - p^{d - 1})) + \varepsilon(G) \\
                    & < \frac{d (d + 1)}{2} + \log((p^d - 1) (p^{d - 1} - 1) \cdots (p - 1)) + \varepsilon(G)                                   \\
                    & < \frac{d (d + 1)}{2} (1 + \log p) + \varepsilon(G).
    \end{align*}

    Finally, suppose $d = 1$ and $p \geqslant 7$.
    Using \Cref{lemma:chain-in-diagonal-subgroup}, we obtain the chain \eqref{equation:affine-case-theorem-2} again.
    Concatenating the chain $G > H > T$ with \eqref{equation:affine-case-theorem-2} and applying \Cref{lemma:mibs-is-stabiliser-chain-length} and \Cref{lemma:sym-sandwiches-alt}, we see that $\mibs{G}{H} \geqslant 1 + \Omega(p - 1) + \varepsilon(G)$.
    In fact, equality holds, as $\mibs{G}{H} \leqslant 1 + \Omega(\cardinality{H}) = 1 + \Omega(p - 1) + \varepsilon(G)$.
\end{proof}

\section{The product action case}
\label{section:wreath-case}

In this section, we prove \Cref{theorem:wreath-case}.
Once again, most work goes into the explicit construction of an irredundant base in order to prove the lower bounds, while the upper bounds will be obtained easily from the length of $\symgroup{n}$.

Throughout this section, let $m \geqslant 5$ and $k \geqslant 2$ be integers, and let $G$ be $\symgroup{m^k}$ or $\altgroup{m^k}$.
Let $M \coloneqq \symgroup{m} \wr \symgroup{k}$ act in product action on
$ \Delta \coloneqq \{ (a_1, \ldots, a_k) \mid a_1, \ldots, a_k \in \{1, \ldots, m\} \} $
and identify $M$ with a subgroup of $\symgroup{m^k}$.

\begin{theorem}[\cite{liebeck_praeger_saxl_1987}]
    \label{theorem:wreath-case-maximality}
    The group $M \cap G$ is a maximal subgroup of $G$ if and only if one of the following holds:
    \begin{enumerate}[label=(\roman*)]
        \item $m \equiv 1 \pmod{2}$;
        \item $G = \symgroup{m^k}$, $m \equiv 2 \pmod 4$ and $k = 2$;
        \item $G = \altgroup{m^k}$, $m \equiv 0 \pmod{4}$ and $k = 2$;
        \item $G = \altgroup{m^k}$, $m \equiv 0 \pmod{2}$ and $k \geqslant 3$.
    \end{enumerate}
\end{theorem}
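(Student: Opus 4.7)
The plan is to derive the theorem from the O'Nan--Scott classification of maximal subgroups of $\symgroup{n}$ and $\altgroup{n}$, combined with a direct parity calculation. Since $m \geqslant 5$, the group $\symgroup{m}$ is primitive and non-regular on $m$ points, so $M = \symgroup{m} \wr \symgroup{k}$ acts primitively on $\Delta$ in product action; its socle is $\altgroup{m}^k$, a non-abelian characteristically simple group of product-action type.

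First I would determine when $M \leqslant \altgroup{n}$ by computing signs of the natural generators. A transposition of $\symgroup{m}$ on one coordinate is a product of $m^{k - 1}$ disjoint transpositions on $\Delta$, hence is even exactly when $m$ is even (using $k \geqslant 2$). A transposition of the top $\symgroup{k}$ swapping two coordinates consists of $m^{k - 1}(m - 1)/2$ disjoint transpositions, and a short case analysis modulo $4$ shows that $M \leqslant \altgroup{n}$ if and only if $m \equiv 0 \pmod 4$, or $m \equiv 2 \pmod 4$ with $k \geqslant 3$. The listed cases (i)--(iv) are then precisely those in which $M \cap G$ equals $M$ or has index $2$ in $M$ inside the ``correct'' parent; in each excluded configuration with $G = \symgroup{n}$ one has $M \cap G = M \leqslant \altgroup{n} < G$, a strict chain that immediately prevents maximality.

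For the listed cases I would prove maximality by contradiction: suppose $M \cap G < K \leqslant G$. Since $M \cap G$ is primitive on $\Delta$, so is $K$, and O'Nan--Scott places $K$ in one of the affine, diagonal, product-action, twisted wreath, or almost simple types. The socle of $M \cap G$ is $\altgroup{m}^k$, forcing $\soc K$ to contain $\altgroup{m}^k$, which rules out the affine, diagonal, twisted wreath and almost simple types. Within the product-action type the socle uniquely determines the parameters $(m, k)$, so the normaliser in $\symon{\Delta}$ is precisely $M$; intersecting with $G$ yields $K \leqslant M \cap G$, a contradiction.

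The main obstacle will be the exceptional case $G = \altgroup{n}$ with $m \equiv 2 \pmod 4$ and $k = 2$, which is deliberately excluded from the maximal list even though $M \not\leqslant \altgroup{n}$. Here $M \cap G$ has index $2$ in $M$, and I would need to exhibit a proper primitive overgroup inside $\altgroup{n}$; this comes from the $k = 2$ wreath structure admitting an additional outer automorphism of $\soc(M)$ that is unavailable for $k \geqslant 3$. One must also take particular care at $m = 6$, where the exceptional outer automorphism of $\symgroup{6}$ creates a further candidate overgroup that has to be accounted for explicitly, rather than through a generic appeal to O'Nan--Scott.
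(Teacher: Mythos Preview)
The paper does not prove this statement: it is quoted verbatim from Liebeck, Praeger and Saxl \cite{liebeck_praeger_saxl_1987} as a black box, with no argument supplied. So there is nothing in the paper to compare your proposal against; you are attempting an independent proof of a cited classification result.

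Your parity computation is correct, and the broad strategy (parity to handle the $G=\symgroup{n}$ exclusions, then an O'Nan--Scott analysis for maximality) is indeed the shape of the original argument. However, your treatment of the excluded case $G=\altgroup{n}$, $m\equiv 2\pmod 4$, $k=2$ is wrong in a way that matters. In that case every element of the base group $(\symgroup{m})^2$ is even (each coordinate transposition is a product of $m$ disjoint transpositions, and $m$ is even), while the top transposition is odd; hence $M\cap\altgroup{n}=(\symgroup{m})^2$ exactly. This group is \emph{imprimitive} on $\Delta$: it preserves the system of $m$ ``rows'' $\{(i,j):1\le j\le m\}$. The obstruction to maximality is therefore not a primitive overgroup arising from some extra outer automorphism of $\soc(M)$, but the imprimitive wreath product $(\symgroup{m}\wr\symgroup{m})\cap\altgroup{n}$, which strictly contains $(\symgroup{m})^2$ and is a proper subgroup of $\altgroup{n}$. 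Your sentence ``I would need to exhibit a proper primitive overgroup'' points in the wrong direction, and the appeal to an outer automorphism available only for $k=2$ is not the mechanism at work. A minor further point: in your maximality-by-contradiction paragraph, the inference ``$\soc(M\cap G)=\altgroup{m}^k$ forces $\soc K\supseteq\altgroup{m}^k$'' is not automatic for arbitrary primitive overgroups and needs the actual O'Nan--Scott case analysis rather than a one-line socle containment.
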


The strategy to proving the lower bound in \Cref{theorem:wreath-case} is once again to find suitable two-point stabilisers from which a long chain of subgroups can be built.

For each pair of points $\alpha, \beta \in \Delta$, let $\hamming{\alpha}{\beta}$ denote the Hamming distance between $\alpha$ and $\beta$, namely the number of coordinates that differ.

\begin{lemma}
    \label{lemma:hamming-distance}
    Let $x \in M$.
    Then for all $\alpha, \beta \in \Delta$,
    \[ \hamming{\alpha^x}{\beta^x} = \hamming{\alpha}{\beta}. \]
\end{lemma}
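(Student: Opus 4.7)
The plan is to verify the lemma directly from the definition of the product action, by decomposing $x$ into a base-group part and a top-group part and checking that each preserves the coordinate-wise agreement pattern of a pair of points. Write $x$ uniquely as $b \sigma$, where $b = (s_1, \ldots, s_k) \in \symgroup{m}^k$ is an element of the base group and $\sigma \in \symgroup{k}$ is an element of the top group.

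First I would handle the base group. The product action of $b$ is coordinate-wise, sending $(a_1, \ldots, a_k)$ to $(a_1^{s_1}, \ldots, a_k^{s_k})$. Since each $s_i$ is a bijection on $\{1, \ldots, m\}$, we have $a_i = b_i$ if and only if $a_i^{s_i} = b_i^{s_i}$. Therefore the set of coordinates on which $\alpha$ and $\beta$ agree coincides with the set of coordinates on which $\alpha^b$ and $\beta^b$ agree, and in particular $\hamming{\alpha^b}{\beta^b} = \hamming{\alpha}{\beta}$.

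Next I would handle the top group. The element $\sigma$ acts on $\Delta$ by permuting coordinate positions, so $\alpha^\sigma$ and $\beta^\sigma$ agree in coordinate $i$ if and only if $\alpha$ and $\beta$ agree in coordinate $i^{\sigma^{-1}}$. Hence the set of agreeing coordinates of the image pair is the $\sigma$-image of the set of agreeing coordinates of the original pair, which has the same cardinality, giving $\hamming{\alpha^\sigma}{\beta^\sigma} = \hamming{\alpha}{\beta}$.

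Since $x = b\sigma$ and both factors preserve the Hamming distance, so does $x$. This is an elementary verification and there is no real obstacle; the only point requiring care is to apply the product-action convention consistently so that the inverse $\sigma^{-1}$ appears on the correct side when evaluating the top-group action. Note that no hypothesis on $m$ or $k$ beyond the standing assumptions is needed.
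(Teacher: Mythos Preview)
Your proof is correct and follows essentially the same idea as the paper's: both decompose $x$ into its base-group and top-group components and verify that the coordinate-agreement pattern is preserved. The paper does this in a single chain of equivalences $a_i = b_i \Leftrightarrow a_i^{v_i} = b_i^{v_i} \Leftrightarrow a_{i^w}' = b_{i^w}'$, whereas you treat the two factors separately and then compose; this is only a cosmetic difference. One small notational point: you use $b$ both for the base-group element and for the coordinates $b_i$ of $\beta$, which you may want to rename to avoid confusion.
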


\begin{proof}
    Write $x$ as $(v_1, \ldots, v_k) w$ with $v_1, \ldots, v_k \in \symgroup{m}$ and $w \in \symgroup{k}$.
    Let $\alpha = (a_1, \ldots, a_k)$ and $\beta = (b_1, \ldots, b_k)$.
    Write $\alpha^x = (a_1', \ldots, a_k')$ and $\beta^x = (b_1', \ldots, b_k')$.
    Then for each $i \in \{1, \ldots, k\}$,
    \[ a_i = b_i \Longleftrightarrow {a_i}^{v_i} = {b_i}^{v_i} \Longleftrightarrow a_{i^{w}}' = b_{i^{w}}'. \]
    Since $w$ is a permutation of $\{1, \ldots, k\}$, the result holds.
\end{proof}

Define $u \in \symgroup{m}$ to be $\cycleoneto{m}$ if $m$ is odd, and $\cycleoneto{m - 1}$ if $m$ is even, so that $u$ is an even permutation.
Let $U \coloneqq \generatedby{u} \leqslant \symgroup{m}$ and note that $\centraliser{\symgroup{m}}{u} = U$.
The group $U$ will play a central role in the next lemma.

\begin{lemma}
    \label{lemma:wreath-case-two-point-stabiliser}
    Let $i \in \{2, \ldots, k\}$ and $r \in \{1, \ldots, m\}$.
    Let $T_r$ be the stabiliser of $r$ in $\symgroup{m}$ and let $W_i$ be the pointwise stabiliser of $1$ and $i$ in $\symgroup{k}$.
    Then there exists $x_{i, r} \in \altgroup{m^k}$ such that
    \[
        M \cap M^{x_{i, r}} = \left( U \times {(\symgroup{m})}^{i - 2} \times T_r \times {(\symgroup{m})}^{k - i} \right) \rtimes W_i.
    \]
\end{lemma}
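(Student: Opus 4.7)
My plan is to define $x = x_{i,r} \in \symon{\Delta}$ by setting $\alpha^x = (a_1^u, a_2, \ldots, a_k)$ when $a_i = r$, and $\alpha^x = \alpha$ otherwise, for $\alpha = (a_1, \ldots, a_k) \in \Delta$. On each of the $m^{k-2}$ ``slabs'' obtained by fixing the coordinates $a_j$ for $j \in \{2, \ldots, k\} \setminus \{i\}$ and setting $a_i = r$, $x$ restricts to the cycle $u$ acting on the first coordinate, so $x$ is the disjoint product of $m^{k-2}$ copies of $u$. Since $u$ is even by construction, so is $x$, placing it in $\altgroup{m^k}$.

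Let $N$ denote the right-hand side of the claim. For the inclusion $N \leqslant M \cap M^x$ I would prove the stronger statement that $x$ centralises $N$. Given $g = (v_1, \ldots, v_k) w \in N$ with $v_1 \in U$, $v_i \in T_r$, and $w \in W_i$, I would check $\alpha^{xg} = \alpha^{gx}$ directly: since $w$ fixes $i$ and $v_i$ fixes $r$, the condition ``$a_i = r$'' is invariant under $g$, so $x$ is triggered in the same way before and after $g$; and since $w$ fixes $1$, the only effect of $x$ is to apply $u$ to the first coordinate, where $v_1 \in U = \centraliser{\symgroup{m}}{u}$ commutes with it.

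The reverse inclusion $M \cap M^x \leqslant N$ is where the main effort goes. Given an arbitrary $g = (v_1, \ldots, v_k) w \in M \cap M^x$, I would compute $\alpha^{xgx^{-1}}$ coordinate by coordinate and exploit the constraint that, since $xgx^{-1} \in M$, each of its coordinates must depend on only a single coordinate of $\alpha$. If $w(1) \neq 1$, then the $w(1)$-th coordinate of $\alpha^{xgx^{-1}}$ equals $a_1^{uv_1}$ or $a_1^{v_1}$ according to whether $a_i = r$; no choice of $v_1$ can collapse this two-coordinate dependence without forcing $u = 1$, and so $w(1) = 1$. Let $l = w^{-1}(i)$ (necessarily not $1$, since $w^{-1}(1) = 1$ and $i \neq 1$). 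Then the first coordinate of $\alpha^{xgx^{-1}}$ splits into up to four expressions in $a_1$, indexed by whether $a_i = r$ and whether $a_l^{v_l} = r$. A case analysis shows that these can collapse into a single-coordinate function of $\alpha$ only when $l = i$ (forcing $w(i) = i$), $v_i \in T_r$ (so that the two conditions coincide) and $v_1 \in U$ (so that the two surviving expressions $a_1^{v_1}$ and $a_1^{uv_1 u^{-1}}$ agree). These are precisely the conditions defining $N$.

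The main obstacle I anticipate is the bookkeeping in the reverse inclusion, in particular verifying that the four-case collapse for the first coordinate of $\alpha^{xgx^{-1}}$ genuinely forces each of $w(i) = i$, $v_i \in T_r$ and $v_1 \in U$, rather than some weaker combination. The remaining steps---the parity of $x$ and the centrality of $N$---are routine verifications once the definition of $x$ is in hand.
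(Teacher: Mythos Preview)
Your argument is correct, and the definition of $x_{i,r}$, the parity check, and the centralising half agree verbatim with the paper. For the reverse inclusion, however, you take a genuinely different route. The paper proceeds in two stages: it first restricts to the base group $B = (\symgroup{m})^k$ and, using the Hamming-distance lemma (the statement that elements of $M$ preserve $\hamming{\cdot}{\cdot}$), shows that $B \cap M^x = B \cap K$ by evaluating the commutator $h_2 = x h_1 x^{-1} h_1^{-1}$ at a carefully chosen pair of points at Hamming distance~$1$; it then passes to the top group by observing that $M \cap M^x$ normalises $B \cap K$ and computing $\normaliser{M}{B \cap K}$ explicitly, which forces $w \in W_i$. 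Your approach instead exploits directly the structural characterisation of $M$ as the set of permutations of $\Delta$ for which each output coordinate is a function of a single input coordinate, and reads off all three constraints $w(1)=1$, $w(i)=i$, $v_1\in U$, $v_i\in T_r$ from the four-case expression for the first coordinate of $\alpha^{xgx^{-1}}$. This is more self-contained (it does not rely on the Hamming-distance lemma or the normaliser computation, and in particular avoids the side condition $\normaliser{\symgroup{m}}{U} \neq T_r$), at the price of a slightly more delicate case analysis. One small point to make explicit when you write it up: in the sub-case $l=i$, $r^{v_i} \neq r$, you need $m \geqslant 3$ to guarantee that the ``neither'' branch $a_i \notin \{r, r^{v_i^{-1}}\}$ is non-empty, which is where the contradiction $u=1$ comes from; this is of course covered by $m \geqslant 5$.
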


\begin{proof}
    Without loss of generality, assume $i = 2$.
    Define $x = x_{2, r} \in \symon{\Delta}$ by
    \[
        (a_1, a_2, \ldots, a_k)^x = \begin{cases}
            ({a_1}^u, a_2, \ldots, a_k) & \text{if } a_2 = r, \\
            (a_1, a_2, \ldots, a_k)     & \text{otherwise}.
        \end{cases}
    \]
    The permutation $x$ is a product of $m^{k - 2}$ disjoint $\orderof{u}$-cycles and is therefore even.

    Let $K \coloneqq \left( U \times T_r \times {(\symgroup{m})}^{k - 2} \right) \rtimes W_2$.
    We show first that $K \leqslant M \cap M^{x}$.
    Let $h = (v_1, \ldots, v_m) w^{-1}$ be an element of $K$.
    Then $v_1 \in U$, $v_2$ fixes $r$, and $w$ fixes $1$ and $2$.
    Therefore, for all $\alpha = (a_1, a_2, \ldots, a_k) \in \Delta$, if $a_2 = r$, then
    \begin{align*}
        \alpha^{h x} & = ({a_1}^{v_1}, a_2, {a_{3^w}}^{v_{3^w}}, \ldots, {a_{k^w}}^{v_{k^w}})^x = ({a_1}^{v_1 u}, a_2, {a_{3^w}}^{v_{3^w}}, \ldots, {a_{k^w}}^{v_{k^w}}) \\
                     & = ({a_1}^{u v_1}, a_2, {a_{3^w}}^{v_{3^w}}, \ldots, {a_{k^w}}^{v_{k^w}}) = ({a_1}^{u}, a_2, a_{3}, \ldots, a_{k})^h = \alpha^{x h};
    \end{align*}
    and if $a_2 \neq r$, then
    \begin{align*}
        \alpha^{h x} & = ({a_1}^{v_1}, {a_2}^{v_2}, {a_{3^w}}^{v_{3^w}}, \ldots, {a_{k^w}}^{v_{k^w}})^x = ({a_1}^{v_1}, {a_2}^{v_2}, {a_{3^w}}^{v_{3^w}}, \ldots, {a_{k^w}}^{v_{k^w}}) \\
                     & = (a_1, a_2, a_3, \ldots, a_k)^h = \alpha^{x h}.
    \end{align*}
    Therefore, $x$ and $h$ commute.
    Since $h$ is arbitrary, $K = K \cap K^x \leqslant M \cap M^x$.

    Let $B$ be the base group $(\symgroup{m})^k$ of $M$.
    Since $K \leqslant M \cap M^x$, we find that $B \cap K \leqslant B \cap M^x$.
    We now show that $B \cap M^x \leqslant B \cap K$, so let $h_1 = (v_1, \ldots, v_k) \in B \cap M^{x}$.
    Then ${h_1}^{x^{-1}} \in M$.
    We show that $v_1 \in U$ and $v_2$ fixes $r$, so that $h_1 \in K$.
    By letting $g_1 \coloneqq (1, 1, v_3, \ldots, v_k) \in K$ and replacing $h_1$ with $g_1^{-1} h_1$, we may assume $v_3 = \cdots = v_k = 1$.
    Let $h_2 \coloneqq x h_1 x^{-1} h_1^{-1} = {h_1}^{x^{-1}} h_1^{-1} \in M$, and let $\alpha \coloneqq (a, b, c, \ldots, c)$ and $\beta \coloneqq (a, r, c, \ldots, c)$ be elements of $\Delta$ with $a \neq m$ and $b \notin \{r, r^{v_2^{-1}}\}$.
    Then $\alpha$ and $\alpha^{h_1}$ are both fixed by $x$, and so $\alpha^{h_2} = \alpha$.
    On the other hand,
    \[
        \beta^{h_2} =
        \begin{cases}
            (a^{u v_1 u^{-1} v_1^{-1}}, r, c, \ldots, c), & \text{if } r^{v_2} = r, \\
            (a^{u}, r, c, \ldots, c),                     & \text{otherwise}.
        \end{cases}
    \]
    Since $\hamming{\alpha^{h_2}}{\beta^{h_2}} = \hamming{\alpha}{\beta} = 1$ by \Cref{lemma:hamming-distance} and $a^u \neq a$, it must be the case that $r^{v_2} = r$ and $a^{u v_1 u^{-1} v_1^{-1}} = a$.
    Therefore, $v_2 \in T_r$ and, as $a$ is arbitrary in $\{1, \ldots, m - 1\}$, we deduce that $v_1 \in \centraliser{\symgroup{m}}{u} = U$ and hence $h_1 \in K$.
    Thus, $B \cap M^x \leqslant B \cap K$ and so $B \cap M^x = B \cap K$.

    To show that $M \cap M^x \leqslant K$, let $h_3 \in M \cap M^x$.
    Now, $B \unlhd M$ and so $B \cap K = B \cap M^x \unlhd M \cap M^x$.
    Therefore,
    \[ h_3 \in \normaliser{M}{B \cap K} = \left( \normaliser{\symgroup{m}}{U} \times T_r \times {(\symgroup{m})}^{k - 2} \right) \rtimes W_2. \]
    The equality uses the fact that $\normaliser{\symgroup{m}}{U} \neq T_r$ (as $m \geqslant 5$).
    Through left multiplication by an element of $K$, we may assume $h_3 \in \normaliser{\symgroup{m}}{U} \times (1_{\symgroup{m}})^{k - 1}$.
    Then $h_3 \in B \cap M^x \leqslant K$.
    Since $h_3$ is arbitrary, $M \cap M^x \leqslant K$.
    Therefore, $K = M \cap M^x$, as required.
\end{proof}

We are now ready to prove the main result for the product action case.
Recall the assumption that $G$ is $\symgroup{m^k}$ or $\altgroup{m^k}$ and $H = M \cap G$.

\begin{proof}[Proof of \Cref{theorem:wreath-case}]
    Firstly, suppose that $H = M$.
    Let $\mathcal{I} \coloneqq \{2, \ldots, k\} \times \{1, \ldots, m - 1\}$, ordered lexicographically.
    For each $(i, r) \in \mathcal{I}$, let $x_{i, r} \in \altgroup{m^k} \leqslant G$ be as in \Cref{lemma:wreath-case-two-point-stabiliser}, and define
    \[ X_{i, r} \coloneqq \{ 1 \} \cup \{ x_{i', r'} \mid (i', r') \in \mathcal{I}, (i', r') \leqslant (i, r) \} \subseteq G. \]
    Then for all $(i, r) \in \mathcal{I}$,
    \[ B \cap \bigcap_{x \in X_{i, r}} M^x = U \times (1_{\symgroup{m}})^{i - 2} \times (\symgroup{m})_{1, \ldots, r} \times (\symgroup{m})^{k - i}. \]
    Hence, for all $(i, r), (j, s) \in \mathcal{I}$ with $(i, r) < (j, s)$,
    $ \bigcap_{x \in X_{i, r}} M^x > \bigcap_{x \in X_{j, s}} M^x. $
    This results in the following chain of stabiliser subgroups, of length $(m - 1)(k - 1) + 2$:
    \[
        G > M > \bigcap_{x \in X_{2, 1}} M^x > \cdots > \bigcap_{x \in X_{2, m - 1}} M^x > \bigcap_{x \in X_{3, 1}} M^x > \cdots > \bigcap_{x \in X_{k, m - 1}} M^x > 1.
    \]
    Therefore, by \Cref{lemma:mibs-is-stabiliser-chain-length}, $\mibs{G}{H} = \mibs{G}{M} \geqslant (m - 1) (k - 1) + 2$.

    Now, if $H \neq M$, then $G = \altgroup{m^k}$, and
    $\mibs{G}{H} \geqslant \mibs{\symgroup{m^k}}{M} - 1 \geqslant (m - 1) (k - 1) + 1$ by \Cref{lemma:sym-sandwiches-alt}.

    Finally, for the upper bound on $\mibs{G}{H}$, we use \eqref{equation:length-symmetric-group-inequality} and \cite[Lemma 2.1]{cameron_solomon_turull_1989} to compute
    \begin{align*}
        \mibs{G}{H} & \leqslant 1 + \length{H} \leqslant 1 + \length{M} \leqslant 1 + k \, \length{\symgroup{m}} + \length{\symgroup{k}}             \\
                    & \leqslant 1 + k \left(\frac{3}{2} m - 2\right) + \left(\frac{3}{2} k - 2\right) \leqslant \frac{3}{2} m k - \frac{1}{2} k - 1.
        \qedhere
    \end{align*}
\end{proof}

\section{Proof of Theorem \ref{theorem:optimal-upper-bounds}}
\label{section:proof-of-main-result}

In this final section, we zoom out for the general case and prove \Cref{theorem:optimal-upper-bounds} by considering the order of $H$ and assembling results from previous sections.

Recall that $G$ is $\symgroup{n}$ or $\altgroup{n}$ ($n \geqslant 7$) and $H \neq \altgroup{n}$ is a primitive maximal subgroup of $G$.
Maróti proved in \cite{maróti_2002} several useful upper bounds on the order of a primitive subgroup of the symmetric group.

\begin{lemma}
    \label{lemma:maróti-bounds}
    \begin{enumerate}[label=(\roman*)]
        \item $\cardinality{H} < 50 n^{\sqrt{n}}$.
        \item At least one of the following holds:
              \begin{enumerate}[label=(\alph*)]
                  \item $H = S_m \cap \, G$ acting on $r$-subsets of $\{1, \ldots, m\}$ with $n = \binom{m}{r}$ for some integers $m, r$ with $m > 2 r \geqslant 4$;
                  \item $H = \left(\symgroup{m} \wr \symgroup{k}\right) \cap G$ with $n = m^k$ for some $m \geqslant 5$ and $k \geqslant 2$;
                  \item $\cardinality{H} < n^{1 + \lfloor \log n \rfloor}$;
                  \item $H$ is one of the Mathieu groups $M_{11}, M_{12}, M_{23}, M_{24}$ acting $4$-transitively.
              \end{enumerate}
    \end{enumerate}
\end{lemma}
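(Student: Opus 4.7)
The plan is to extract both statements directly from the main results of Maróti's paper \cite{maróti_2002}. For part (i), Maróti established the bound $\cardinality{H} < 50 n^{\sqrt{n}}$ for every primitive permutation group of degree $n$; since $H$ is primitive in its natural action on $n$ points, this yields the claim immediately.

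For part (ii), I would invoke Maróti's structural trichotomy: every primitive group $H$ of degree $n$ either (A) is sandwiched between $(\altgroup{m})^r$ and $\symgroup{m} \wr \symgroup{r}$ in the product action on $n = \binom{m}{k}^r$ points for some integers $m, k, r$; (B) is one of the Mathieu groups $M_{11}, M_{12}, M_{23}, M_{24}$ in its $4$-transitive action; or (C) satisfies $\cardinality{H} < n^{1 + \lfloor \log n \rfloor}$. Classes (B) and (C) yield cases (d) and (c) of the lemma directly. In class (A), the maximality of $H$ in $G$, together with the O'Nan--Scott classification of maximal subgroups of $\symgroup{n}$ and $\altgroup{n}$, pins $H$ down precisely: if $r = 1$ the action is on $k$-subsets and maximality forces $H = \symgroup{m} \cap G$, yielding case (a) after renaming; if $k = 1$ and $r \geqslant 2$, maximality forces $H = (\symgroup{m} \wr \symgroup{r}) \cap G$, yielding case (b) after renaming.

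The main obstacle is the mixed subcase $k, r \geqslant 2$ of class (A). Here $H$ preserves not only the finer block structure arising from $\symgroup{m}$ acting on $k$-subsets within each factor, but also the coarser product-action block structure coming from the $r$ factors of size $\binom{m}{k}$; hence $H$ is properly contained in the intersection of the larger wreath product $\symgroup{\binom{m}{k}} \wr \symgroup{r}$ with $G$, and this intersection is itself a proper subgroup of $G$ when $r \geqslant 2$. This contradicts the maximality of $H$ in $G$, so the mixed subcase does not arise. As a fallback, one could instead verify by a direct estimate of $\cardinality{\symgroup{m} \wr \symgroup{r}} = (m!)^r \, r!$ against $n^{1+\lfloor \log n \rfloor}$ with $n = \binom{m}{k}^r$ that case (c) holds automatically in this subcase; either route completes the case analysis, since the lemma only requires at least one of (a)--(d) to hold.
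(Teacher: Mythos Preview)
Your proposal is correct and follows essentially the same route as the paper: invoke Mar\'oti's bounds for both parts, and for part~(ii) combine Mar\'oti's trichotomy with the Liebeck--Praeger--Saxl classification of maximal subgroups (which is what your appeal to O'Nan--Scott plus maximality amounts to) to pin down cases (a) and (b). The paper's proof is a terse two-line citation, whereas you spell out the elimination of the mixed subcase $k,r\geqslant 2$ explicitly; this extra detail is sound but not a different approach.
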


\begin{proof}
    (i) follows immediately from \cite[Corollary 1.1]{maróti_2002}.
    (ii) follows from \cite[Theorem 1.1]{maróti_2002} and the description of the maximal subgroups of $\symgroup{n}$ and $\altgroup{n}$ in \cite{liebeck_praeger_saxl_1987}.
\end{proof}

Equipped with these results as well as \Cref{theorem:affine-case,theorem:wreath-case}, we are ready to prove \Cref{theorem:optimal-upper-bounds}.

\begin{proof}[Proof of \Cref{theorem:optimal-upper-bounds}]
    If $H$ is as in case (a) of \Cref{lemma:maróti-bounds}(ii), then $n = \binom{m}{r} \geqslant \binom{m}{2} = \frac{m (m - 1)}{2}$.
    Hence $m < 2 \sqrt{n}$ and, by \eqref{equation:length-symmetric-group-inequality},
    \[ \mibs{G}{H} \leqslant 1 + \length{H} \leqslant 1 + \length{S_m} < 3 \sqrt{n} - 1. \]
    If $H$ is as in case (b) of \Cref{lemma:maróti-bounds}(ii), then $n = m^k$.
    By \Cref{theorem:wreath-case}, $\mibs{G}{H} \leqslant \frac{3}{2} m k - \frac{1}{2} k - 1$.
    If $k = 2$, then
    \[ \mibs{G}{H} \leqslant 3 m - 2 < 3 \sqrt{n} - 1. \]
    If $k \geqslant 3$, then
    \[ \mibs{G}{H} < \frac{3}{2} m \frac{\log n}{\log m} \leqslant \frac{3}{2} \sqrt[3]{n} \frac{\log n}{\log 5}< 3 \sqrt{n} - 1. \]
    If $H$ is as in case (c) of \Cref{lemma:maróti-bounds}(ii), then
    \[ \mibs{G}{H} \leqslant 1 + \length{H} \leqslant 1 + \log \cardinality{H} < 1 + \log \left(n^{1 + \log n}\right) = (\log n)^2 + \log n + 1. \]
    Using the lists of maximal subgroups in \cite{atlas1985}, one can check that $\length{M_{11}} = 7$, $\length{M_{12}} = 8$, $\length{M_{23}} = 11$, and $\length{M_{24}} = 14$.
    It is thus easy to verify that $\mibs{G}{H} \leqslant 1 + \length{H} < (\log n)^2$ in case (d) of \Cref{lemma:maróti-bounds}(ii).
    Therefore, part (i) of the theorem holds.

    We now prove parts (ii) and (iii).
    By \Cref{theorem:affine-case-maximality}, if $n = 3^d$ for some integer $d \geqslant 2$, then $H = \agl{d}{3} \cap G$ is a maximal subgroup of $G$.
    \Cref{theorem:affine-case} now gives
    \[ \mibs{G}{H} > \frac{d^2}{2} + \frac{d}{2} = \frac{(\log n)^2}{2 (\log 3)^2} + \frac{\log n}{2 \log 3}, \]
    as required.

    By \Cref{theorem:wreath-case-maximality}, if $n = m^2$ for some odd integer $m \geqslant 5$, then $H = (\symgroup{m} \wr \symgroup{2}) \cap G$ is a maximal subgroup of $G$.
    \Cref{theorem:wreath-case} now gives $\mibs{G}{H} \geqslant m = \sqrt{n}$, as required.
\end{proof}

Finally, we prove an additional lemma.

\begin{lemma}
    \label{lemma:asymptotics}
    Let $t$ be the index of $H$ in $G$.
    There exist constants $c_5, c_6, c_7, c_8 \in \positivereal$ such that
    \begin{enumerate}[label=(\roman*)]
        \item $c_5 \log t / \log \log t < n < c_6 \log t / \log \log t.$
        \item $c_7 \log \log t < \log n < c_8 \log \log t.$
    \end{enumerate}
\end{lemma}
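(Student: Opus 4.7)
The plan is to prove that $\log t = \Theta(n \log n)$, from which both parts of the lemma follow immediately. Indeed, since $\log(n \log n) = \log n + \log \log n$, a two-sided bound $\log t = \Theta(n \log n)$ gives $\log \log t = \Theta(\log n)$, which is part (ii); dividing then yields $\log t / \log \log t = \Theta(n)$, which is part (i). Any finitely many small values of $n$ (such as $n = 7, 8, \ldots$) can be absorbed into the constants $c_5, \ldots, c_8$.

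For the lower bound $\log t = \Omega(n \log n)$, I would apply \Cref{lemma:maróti-bounds}(i) to bound $\cardinality{H} < 50 n^{\sqrt{n}}$. Since $\cardinality{G} \geq n!/2$, this yields
\[ \log t \geq \log(n!) - \sqrt{n} \log n - \log 100. \]
Stirling's approximation, $\log(n!) = n \log n - n \log e + O(\log n)$, together with the fact that $\sqrt{n} \log n = o(n \log n)$, makes the right-hand side $\Omega(n \log n)$, as required.

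For the upper bound $\log t = O(n \log n)$, I would use that $H$ is primitive and hence transitive on $n$ points, so that $n$ divides $\cardinality{H}$. Consequently $t \leq \cardinality{G}/n \leq (n-1)!$, and Stirling again gives $\log t \leq \log((n-1)!) = O(n \log n)$. There is no substantive obstacle in the proof; the only care needed is to check that the implicit constants work uniformly in $n$, which is routine once the small cases are handled by adjusting $c_5, \ldots, c_8$ accordingly.
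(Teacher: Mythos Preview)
Your proposal is correct and follows essentially the same route as the paper: establish $\log t = \Theta(n\log n)$ using Mar\'oti's bound together with Stirling for the lower bound, and a trivial upper bound on $t$ for the other direction, then deduce (i) and (ii). The only cosmetic difference is that the paper uses the cruder estimate $\log t < \log\lvert G\rvert \leqslant n\log n$ for the upper bound rather than your transitivity observation $t \leqslant (n-1)!$, which is slightly simpler but makes no material difference.
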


\begin{proof}
    It suffices to prove that such constants exist for $n$ sufficiently large, so we may assume $n > 100$.
    We first note that $\log t < \log \cardinality{G} \leqslant n \log n$, from which we obtain
    \begin{equation}
        \nonumber
        \log \log t < \log n + \log \log n < \log n + (\log n) \frac{\log \log 100}{\log 100} < 1.412 \log n.
    \end{equation}
    Hence we may take $c_7 = 1 / 1.412 > 0.708$ for $n > 100$.
    By \Cref{lemma:maróti-bounds}(i),
    \begin{align*}
        \log t & = \log \cardinality{G : H} = \log \cardinality{G} - \log \cardinality{H} > \log \frac{n!}{2} - \log \left(50 n^{\sqrt{n}}\right)           \\
               & > \left(n \log n - n \log e - 1\right) - \left(\sqrt{n} \log n + \log 50\right) = n \log n - n \log e - \sqrt{n} \log n - \log 100         \\
               & > n \log n - n (\log e )\frac{\log n}{\log 100} - \sqrt{n} (\log n) \frac{\sqrt{n}}{\sqrt{100}} - (\log 100) \frac{n \log n}{100 \log 100} \\
               & > 0.672 \, n \log n,
    \end{align*}
    where the second inequality follows from Stirling's approximation and the last inequality follows from the fact that $\log e / \log 100 < 0.218$.
    We deduce further that $\log \log t > \log n$ and hence take $c_8 = 1$ for $n > 100$.

    Finally, $\log t / \log \log t < n \log n / \log n = n$ and $\log t / \log \log t > 0.672 \, n \log n / 1.412 \log n = 0.672 \, n / 1.412$.
    Therefore, for $n > 100$, we may take $c_5 = 1$, $c_6 = 1.412 / 0.672 < 2.11$.
\end{proof}

\Cref{corollary:optimal-upper-bounds-degree} now follows by combining \Cref{theorem:optimal-upper-bounds} and \Cref{lemma:asymptotics}.

\begin{remark}
    \label{remark:constants}
    Verifying all cases with $7 \leqslant n \leqslant 100$ by enumerating primitive maximal subgroups of $\symgroup{n}$ and $\altgroup{n}$ in Magma \cite{magma_MR1484478}, we may take $c_5 = 1$, $c_6 = 4.03$, $c_7 = 0.70$, and $c_8 = 1.53$ in the statement of \Cref{lemma:asymptotics}.
    With these values of the constants and those in the proof of \Cref{lemma:asymptotics}, it is straightforward to obtain the values of the constants $c_2, c_3, c_4$ given in \Cref{remark:main-constants}.
    For the values of $c_1$, we use in addition the fact that, for any $n_0$, if $n \geqslant n_0$, then $(\log n)^2 + (\log n) + 1 = (\log n)^2 \left(1 + 1 / \log n + 1 / (\log n)^2 \right) < c_8^2 \left(1 + 1 / \log n_0 + 1 / (\log n_0)^2 \right) (\log \log t)^2.$
\end{remark}

\paragraph{Acknowledgement}
The authors would like to thank the anonymous referees for their careful reading and helpful comments.
The authors would like to thank the Isaac Newton Institute for Mathematical Sciences for its support and hospitality during the programme \textit{Groups, representations and applications: new perspectives}, when work on this article was undertaken.
This work was supported by EPSRC grant N\textsuperscript{\underline{o}} EP/R014604/1, and also partially supported by a grant from the Simons Foundation.
For the purpose of open access, the authors have applied a Creative Commons attribution (CC BY) licence to any Author Accepted Manuscript version arising.

\bibliographystyle{abbrv}

\vspace{2em}

\noindent
\begin{minipage}[l]{0.5\textwidth}
    \setlength{\parskip}{0.5em}

    Colva M. Roney-Dougal

    School of Mathematics and Statistics \\
    University of St Andrews \\
    St Andrews \\
    KY16 9SS \\
    United Kingdom

    \href{mailto:Colva.Roney-Dougal@st-andrews.ac.uk}{Colva.Roney-Dougal@st-andrews.ac.uk}

\end{minipage}
\begin{minipage}[r]{0.5\textwidth}
    \setlength{\parskip}{0.5em}

    Peiran Wu

    School of Mathematics and Statistics \\
    University of St Andrews \\
    St Andrews \\
    KY16 9SS \\
    United Kingdom

    \href{mailto:pw72@st-andrews.ac.uk}{pw72@st-andrews.ac.uk}

\end{minipage}

\end{document}